%%%%%%%%%%%class file
\documentclass{imamci}
\usepackage{mathtools}
\usepackage{amsthm}% Comment this line out
\usepackage[colorlinks=true, urlcolor=blue, citecolor=blue, linkcolor=blue]{hyperref}
\usepackage{graphicx,psfrag}
\usepackage{caption}
\usepackage{pgfplots}                                                      % if you need a4paper
\usepackage{amssymb}
\usepackage{amsmath} % assumes amsmath package installed
\usepackage{algorithm}
\usepackage{algorithmic}
\usepackage[numbers]{natbib}
%%%%%%%%%%%%%%%%%%%%%%%%%%%%
\newcommand{\range}{\operatorname{rge}}
\newcommand*{\bigO}{\scalebox{1.5}{\ensuremath{\circ}}}

\newcommand{\Co}{\operatorname{co}}
\newcommand{\ch}{\overline{\operatorname{co}}}
\newcommand{\0}{{\bf 0}}
\newcommand{\1}{{\bf 1}}
\newcommand*{\tr}{^{\mkern-1.5mu\mathsf{T}}}

\newcommand{\R}{{\mathbb{R}}}

\newcommand{\Spn}{{\mathcal{S}^n_+}}
\newcommand{\Dpn}{{\mathcal{D}^n_+}}

\newcommand{\Dpm}{{\mathcal{D}^{m}_+}}

\newcommand{\q}{\operatorname{q}}
\newcommand{\Sign}{\operatorname{sign}}
\newcommand{\dom}{\operatorname{dom}}
\newcommand{\Id}{\mathrm{I}}
\newcommand{\Int}{\mathrm{Int}\,}
\newcommand{\spec}{\mathrm{spec}}
\newcommand{\Tr}{\operatorname{trace}}
\newcommand{\minimize}{\operatorname{minimize}}
\newcommand{\He}{\operatorname{He}}
\newcommand{\Diag}{\operatorname{diag}}

\newcommand{\eg}{{e.g.}}
\newcommand{\ie}{{i.e.}}

\newcommand*{\QEDB}{\hfill\ensuremath{\square}}%

%this command gives the journal no. 
%
\jno{dnz003}
%%%%%%%%%%%%%%

%%%%%%%%%%%%%%%%%
%call  packages
\usepackage{natbib}
%%%%%%%%%%%%%%

%%%%%%%%%%%%%%%%%%

\usepackage{graphicx}
\usepackage{amsmath,amsthm,bm,mathrsfs}
\usepackage{textcomp}
%%%%%%%%%%%%%%%%%

%%%%%%%%%%%
%this files contains Theorem styles based in IMA JOURNALS
%
% %%%%%%%%%%%%%%%%%%%%proof
%
\renewenvironment{proof}[1][Proof]{\noindent\textit{#1. } }{\hfill$\square$}

%%%%%%%%%%%%%%%%Dcolumn 

 \newtheoremstyle{theorem}{6pt}{6pt}{\rm}{}{\sffamily}{ }{ }{}
 \theoremstyle{theorem}
\newtheorem{theorem}{\sc Theorem}[section]

%\newtheoremstyle{algorithm}{6pt}{6pt}{\rm}{}{\sffamily}{ }{ }{}
%\theoremstyle{algorithm}
%\newtheorem{algorithm}{\sc Algorithm}[section]

 \newtheoremstyle{lemma}{6pt}{6pt}{\rm}{}{\sffamily}{ }{ }{}
 \theoremstyle{lemma}
 \newtheorem{lemma}{\sc Lemma}[section]

\newtheoremstyle{case}{6pt}{6pt}{\rm}{}{\sffamily}{. }{ }{}
 \theoremstyle{case}

 \newtheoremstyle{statement}{6pt}{6pt}{\rm}{}{\sffamily}{ }{ }{}
\theoremstyle{statement}

 \newtheoremstyle{corollary}{6pt}{6pt}{\rm}{}{\sffamily}{ }{ }{}
 \theoremstyle{corollary}

  \newtheoremstyle{definition}{6pt}{6pt}{\rm}{}{\sffamily}{ }{ }{}
 \theoremstyle{definition}
 \newtheorem{definition}{\sc Definition}[section]

\newtheoremstyle{example}{6pt}{6pt}{\rm}{}{\sffamily}{ }{ }{}
\theoremstyle{example}
\newtheorem{example}[theorem]{\sc Example}

\newtheorem{problem}[theorem]{Problem}

\newtheoremstyle{remark}{6pt}{6pt}{\rm}{}{\sffamily}{ }{ }{}
\theoremstyle{remark}
\newtheorem{remark}{\sc Remark}[section]

\newtheoremstyle{approximation}{6pt}{6pt}{\rm}{}{\sffamily}{ }{ }{}
\theoremstyle{approximation}

\newtheoremstyle{scheme}{6pt}{6pt}{\rm}{}{\sffamily}{ }{ }{}
\theoremstyle{scheme}

\newtheoremstyle{Algorithm}{6pt}{6pt}{\rm}{}{\sffamily}{ }{ }{}
\theoremstyle{Algorithm}

\newtheoremstyle{Assumption}{6pt}{6pt}{\rm}{}{\sffamily}{ }{ }{}
\theoremstyle{Assumption}

\newtheoremstyle{proposition}{6pt}{6pt}{\rm}{}{\sffamily}{ }{ }{}
\theoremstyle{proposition}
\newtheorem{proposition}{\sc Proposition}[section]

\newtheoremstyle{hypo}{6pt}{6pt}{\rm}{}{\sffamily}{ }{ }{}
 \theoremstyle{hypo}

  \newtheoremstyle{Step}{6pt}{6pt}{\rm}{}{}{ }{ }{}
 \theoremstyle{Step}

%%%%%%%%%%%%%

\numberwithin{equation}{section}

\begin{document}

%%%%%%%%%%%%%%%%%
\newcommand{\mytitle}{On Sensor Quantization in Linear Control Systems: Krasovskii Solutions meet Semidefinite Programming}
\title{\mytitle}
\author{ {\sc Francesco Ferrante}\\[2pt]
Control Systems Department, Gipsa-lab, Universit\'e de Grenoble, 11 rue des Math\'ematiques, BP 46, 38402 Saint-Martin d'H\`eres, France\\[6pt]
{\sc Fr\'ed\'eric Gouaisbaut and Sophie Tarbouriech}\\[2pt]
LAAS-CNRS, Universit\'e de Toulouse, CNRS, UPS, Toulouse, France
\\[6pt]
{\rm \bf \footnotesize This article has been published in the IMA Journal of Mathematical Control \& Information Published by Oxford University Press\\
Volume 37, Issue 2, June 2020, Pages 395–417, \href{https://doi.org/10.1093/imamci/dnz003}{https://doi.org/10.1093/imamci/dnz003}. If you find this paper useful for your research, please cite the published version.}\\[2pt]
\vspace*{6pt}}
\pagestyle{headings}
\markboth{F. Ferrante, F. Gouaisbaut, and S. Tarbouriech}{\rm\mytitle }
\maketitle

%%%%%%%%%%%%%%%%%abstract style
%Two grouping braces are necessary in abstract environment
%first argument contains abstract text; second argument contains keywords
%text

\begin{abstract}
{Stability and stabilization for linear state feedback control systems in the presence of sensor quantization are studied. As the closed-loop system is described by a discontinuous right-hand side differential equation, Krasovskii solutions (to the closed-loop system) are considered. 
Sufficient conditions in the form of matrix inequalities are proposed to characterize uniform global asymptotic stability of a compact set containing the origin. Such conditions are shown to be always feasible whenever the quantization free closed-loop system is asymptotically stable. Building on the obtained conditions, computationally affordable algorithms for the solutions to the considered problems are illustrated. The effectiveness of the proposed methodology is shown in three examples\footnote{\textcolor{blue}{This file contains a fix to a typographical error in the published paper. The error is in blue font and there is a footnote explaining it. Last update: \today.}
} }
{Quantization, Discontinuous-Control, Linear Matrix Inequalities, Krasovksii Solutions.}
\end{abstract}
%%%%%%%%%%%%%%%%%%%%%%%%%%%%%%%
\section{Introduction}
Recently technology enhancements have enabled the conception of a new generation of engineered systems integrating physical interactions and computational and communication abilities.
This new trend has been having a strong impact also in modern control systems that are nowadays built via the adoption of digital controllers and digital instrumentation \cite{murray2003future}.
When a physical system interacts with a digital one, side effects as time-delays, asynchronism, and quantization 
are unavoidable issues that can often turn into an excessive performance degradation, like the appearing of limit cycles or chaotic phenomena or even instability of the closed-loop system.
Concerning the effect of quantization in control systems, since such a phenomenon is almost pervasive in modern engineered control systems, its study has attracted researchers over the last years; see, \eg, \cite{brockett2000quantized,ceragioli2007discontinuous,Coutinho2010,Delchamps1990,fu2005sector,lib/book2003,Paden,SoFred} just to cite a few.  

In this paper, we address stability analysis and stabilization for linear state feedback control systems in the presence of sensor quantization. 
Specifically, as in \cite{cer:dep:fra/automatica2011}, we model the uniform quantizer as a discontinuous static isolated nonlinearity entering into the dynamics of the closed-loop system. At this stage, since the resulting closed-loop system is described by a discontinuous right-hand side differential equation, likewise to \cite{ceragioli2007discontinuous, krasovskiui1963stability,FerranteAutomatica2015}, we adopt the notion of Krasovskii solutions. This allows us to overcome the issues related to the existence of solutions and to exploit a large number of results available in the literature \cite{ceragioli2007discontinuous,teel2010smooth}. 
Then, via the use of the sector conditions for the uniform quantizer proposed in \cite{Ferrante2014ECC}, we propose sufficient conditions in the form of matrix inequalities to ensure uniform global asymptotic stability of a  compact set $\mathcal{A}$ surrounding the origin, (global asymptotic stability of the origin is impossible to achieve whenever the plant is not open loop asymptotically stable; see, \eg, \cite{cer:dep:fra/automatica2011,Ferrante2014ECC,SoFred}). Such conditions, although conservative, can be exploited in a convex optimization setup to dramatically decrease the size of the resulting attractor with respect to other approaches, \eg, \cite{lib/book2003}.
Next, building on the conditions issued from the stability analysis problem, by relying on the projection lemma~\cite{pipeleers2009extended}, we propose a computationally affordable algorithm to design a stabilizing controller allowing to shrink the size of the set $\mathcal{A}$. Such an algorithm is shown to be effective in several numerical examples.
\subsection*{Contribution}
The main message we want to convey in this paper is that tools 
from discontinuous right-hand side differential equations can be employed in a constructive perspective via the use of semidefinite programming. 

The contributions of this paper are as follows. Firstly, we formally prove that the sector conditions provided in 
\cite{Ferrante2014ECC} can be exploited to deal with the Krasovskii regularization of the closed-loop system. This aspect was informally argued in \cite{Ferrante2014ECC}.
A unique feature of this paper consists of showing that the set wherein the closed-loop trajectories converge is 
uniformly globally asymptotically stable. It is worthwhile to remark that the majority of the results available on quantized control systems essentially focus only on ultimate boundedness properties for the closed-loop system; see \cite[Chapter 5]{lib/book2003}, \cite{ceragioli2007discontinuous,SoFred}, just to mention a few. On the other hand, uniform global asymptotic stability is relevant since it is robustly maintained in the presence of small perturbations as, \eg, measurement noise that inevitably occur in practice; see \cite{clarke2010discontinuous}.

Secondly, we show that the constructive result we propose for stability analysis is lossless with respect to the more general results given, {\itshape e.g.}, in \cite{lib/book2003}, in the sense that if $A+BK$ is Hurwitz, then the conditions we propose are always feasible. Thirdly, building on the lossless nature of the stability analysis conditions, we propose an iterative computationally affordable algorithm for the solution to the stabilization problem, for which convergence is always ensured. Such an algorithm allows to solve the stabilization problem while considering some optimization aspects by leveraging on convex optimization. Numerical comparisons with alternative algorithms, as \cite{kocvara2005penbmi}, are offered throughout the paper.

The remainder of this paper is organized as follows. Section~\ref{sec:preliminaries} provides some background on differential inclusions. 
Section~\ref{sec:ProbStatement} is aimed at illustrating the problems we solve. Section~\ref{sec:MainResults} presents the main results of the paper. Section~\ref{sec:Numerical} is devoted to numerical issues. Section~\ref{sec:Examples} illustrates the effectiveness of the proposed methodology in three numerical examples.

{\small
{\bf Notation:} 
The symbol $\mathbb{B}$ denotes the unit closed Euclidean ball. The symbols $\Id_n$ denotes the identity matrix, while $\0$ denotes the null matrix (equivalently the null vector) of appropriate dimensions. 
For a matrix $A\in\mathbb{R}^{n\times m}$, $A\tr$, denotes the transpose of $A$, $\Tr(A)$ denotes its trace, and $\He(A)=A+A\tr$. The matrix $\Diag\{A_1,A_2,\dots,A_n\}$ is the block-diagonal matrix having $A_1,A_2,\dots,A_n$ as diagonal blocks. In symmetric matrices $\bullet$ stands for symmetric blocks. With $\Spn$, we denote the set of  the $n\times n$ symmetric positive definite matrices, while  $\Dpn$ denotes the set of  $n\times n$ diagonal positive definite matrices. Given $A\in\R^{n\times n}$, $\mathcal{R}(A)=\{\vert\Re(\lambda)\vert\colon\lambda\in\spec(A)\}$, where $\Re(\lambda)$ represents the real part of $\lambda$.
	For a vector $x\in\mathbb{R}^n$, $x\tr$ denotes its transpose, $\vert x\vert$ stands for the componentwise absolute value of $x$, $\Sign(x)$ is the componentwise sign function, with $\Sign(0)=0$, and $\lfloor x\rfloor$ is the componentwise floor function. The set $\Delta\mathbb{Z}^p$ is the set of the p-tuples of integers multiples of $\Delta$. The symbol $\langle \cdot,\cdot\rangle$ denotes the standard Euclidean inner product and $\times$ stands for the standard Cartesian product. For a set $U$, $\Int U$ denotes the interior of $U$ and $\Co U$ denotes its convex hull. The double arrows notation $F\colon \mathbb{R}^n\rightrightarrows \mathbb{R}^m$ indicates that $F$
	is a set-valued mapping with $F(x)\subset \mathbb{R}^m$. Given a point $y$ and a closed set $\mathcal{A}$, $\vert y \vert_{\mathcal{A}}$ stands for the distance of $y$ to $\mathcal{A}$. A function $\alpha\colon \mathbb{R}_{\geq 0} \rightarrow \mathbb{R}_{\geq 0}$ is said to belong to the class ${\cal K}_\infty$, also denoted $\alpha\in\mathcal{K}_\infty$,  if it is continuous, zero at zero, strictly increasing, and unbounded.
	A function $\beta\colon\mathbb{R}_{\geq 0}\times \mathbb{R}_{\geq 0}\rightarrow \mathbb{R}_{\geq 0}$ is said to belong to class ${\cal K}{\cal L}$, also denoted $\beta\in{\cal K}{\cal L}$, if it is nondecreasing in its first argument, nonincreasing in its second argument, and $\lim_{s\rightarrow 0^+}  \beta(s, t) =\lim_{t\rightarrow +\infty}  \beta(s,t) = 0$.}
\section{Background on differential inclusions}
\label{sec:preliminaries}
\subsection{Preliminary definitions}
In this paper we consider differential inclusions in the following form
\begin{equation}
\label{eq:DiffInc}
\dot{x}\in F(x)\qquad x\in\R^n\quad F\colon\R^n\rightrightarrows\R^n
\end{equation}
For such differential inclusions, we consider the following notion of solution.
\begin{definition}[\cite{aubin1984differential, goebel2012hybrid}]
	An absolutely continuous function $\phi\colon\dom\phi\rightarrow \mathbb{R}^n$ is a solution to \eqref{eq:DiffInc} if  $\dom\phi$ is an interval 
	of $\R_{\geq 0}$
	containing $0$ and $$
	\dot{\phi}(t)\in F(\phi(t))\qquad\mbox{for\,almost\,all}\quad  t\in\dom\phi
	$$
	A solution $\phi$ to \eqref{eq:DiffInc} is said to be maximal if there does not exist any other solution $\psi$ to \eqref{eq:DiffInc} such that $\dom\phi$ is a proper subset of $\dom\psi$ and $\phi(t)=\psi(t)$ for every $t\in\dom\phi$. A solution $\phi$ to \eqref{eq:DiffInc} is said to be complete if  $\sup\dom\phi=\infty$.
\end{definition}
The notions of stability  we consider in this paper for a general differential inclusion as \eqref{eq:DiffInc} and a closed set $\mathcal{A}\subset\R^n$ are given next.
\begin{definition}[Uniform global asymptotic stability]
	\label{def:Chap1:UGAS}
	Let  $\mathcal{A}\subset\R^n$ be closed. The set $\mathcal{A}$ is
	\begin{itemize}
		\item globally uniformly stable for \eqref{eq:DiffInc}, if there exists a class $\mathcal{K}_{\infty}$ function $\alpha$, such that  every solution $\phi$ to \eqref{eq:DiffInc} satisfies $\vert \phi(t)\vert_{\mathcal{A}}\leq \alpha(\vert \phi(0)\vert_{\mathcal{A}})$ for every $t\in\dom\phi$
		\item uniformly  globally attractive  for \eqref{eq:DiffInc}, if every maximal solution to \eqref{eq:DiffInc} is complete, and for every $\varepsilon>0$ and $\mu>0$ there exists $T>0$, such that for any solution $\phi$ to \eqref{eq:DiffInc} with $\vert\phi(0)\vert_{\mathcal{A}}\leq \mu$, $t\geq T$ implies  $\vert\phi(t)\vert_{\mathcal{A}}\leq \varepsilon$
		\item uniformly globally asymptotically stable (\emph{UGAS})  for \eqref{eq:DiffInc}, if it is globally uniformly stable and globally uniformly attractive
	\end{itemize}
\end{definition}
Observe that UGAS of a compact set $\mathcal{A}$ for \eqref{eq:DiffInc} ensures that every maximal solution to \eqref{eq:DiffInc} is bounded. Moreover, observe that the uniformity in the definition of UGAS pertains to the distance of the initial condition to the set $\mathcal{A}$ and not to the initial time as in \cite{Khalil}.

Next we report some technical results on differential inclusions that are used to build some of the proofs presented in this paper.
\subsection{Technical results}
Consider the following differential inclusion 
\begin{equation}
\label{eq:AppDiffInc}
\dot{x}\in F(x)\qquad x\in\R^n\qquad F\colon\R^n\rightrightarrows\R^n
\end{equation}
The first result we give essentially derives from the combined application of \cite[Proposition 7.5.]{goebel2012hybrid} and \cite[Proposition 3]{teel2010smooth}.  The derivation of such a result uses the following  definitions 
\begin{definition}[Strong forward invariance \cite{clarke2010discontinuous}]
	Let  $\mathcal{A}\subset\R^n$ be closed. The set $\mathcal{A}$ is strongly forward invariant for \eqref{eq:AppDiffInc} if every maximal solution to \eqref{eq:AppDiffInc} is complete and $\phi(0)\in\mathcal{A}$ implies $\range\phi\subset\mathcal{A}$.
\end{definition}
\begin{definition}[Basic conditions \cite{teel2010smooth}] 
	The set-valued mapping $F\colon\R^n\rightrightarrows\R^n$ is said to satisfy the basic conditions on $\R^n$ if it is outer semicontinuous on $\R^n$ and for each $x\in\R^n$, $F(x)$ is convex, non-empty, and bounded.
\end{definition} 
Now we are in position to state the mentioned result.
\begin{proposition}
	\label{prop:Chap1:UGAS}
	Consider the differential inclusion in \eqref{eq:AppDiffInc}. Let $\mathcal{A}\subset\R^n$ be compact, strongly forward invariant and uniformly globally attractive for  \eqref{eq:AppDiffInc}. Let $F$ satisfy the basic conditions on $\R^n$. Then, the set $\mathcal{A}$ is UGAS for \eqref{eq:AppDiffInc}.
\end{proposition}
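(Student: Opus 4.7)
Since uniform global attractivity of $\mathcal{A}$ is already a hypothesis, the only ingredient missing for UGAS is \emph{global uniform stability}, namely the existence of $\alpha\in\mathcal{K}_\infty$ with $|\phi(t)|_{\mathcal{A}}\leq \alpha(|\phi(0)|_{\mathcal{A}})$ for every solution $\phi$ of \eqref{eq:AppDiffInc} and every $t\in\dom\phi$. My plan is to derive this estimate from strong forward invariance of $\mathcal{A}$ together with the sequential compactness that the basic conditions on $F$ confer on the solution set of \eqref{eq:AppDiffInc}.

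First I would establish the \emph{local} version of the bound: for each $\varepsilon>0$ there is $\delta>0$ such that $|\phi(0)|_{\mathcal{A}}\leq \delta$ forces $|\phi(t)|_{\mathcal{A}}\leq \varepsilon$ for all $t\in\dom\phi$. Arguing by contradiction, suppose this fails for some $\varepsilon>0$; then one finds solutions $\phi_i$ and times $t_i\geq 0$ with $|\phi_i(0)|_{\mathcal{A}}\to 0$ and $|\phi_i(t_i)|_{\mathcal{A}}\geq \varepsilon$. Applied with $\mu=1$ and attraction level $\varepsilon/2$, uniform global attractivity yields a uniform time $T>0$ after which every such $\phi_i$ lies within $\varepsilon/2$ of $\mathcal{A}$, hence $t_i\leq T$ for $i$ large. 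Introducing the first exit time $t_i^\star:=\inf\{t\geq 0:|\phi_i(t)|_{\mathcal{A}}\geq \varepsilon\}\leq T$, the trajectories $\phi_i$ restricted to $[0,t_i^\star]$ take values in the compact set $\overline{\mathcal{A}+\varepsilon\mathbb{B}}$, on which the basic conditions on $F$ deliver a uniform bound. The $\phi_i$ are therefore equi-Lipschitz and uniformly bounded on $[0,T]$; a subsequence, combined with outer semicontinuity and convex-valuedness of $F$, converges graphically to a solution $\phi^\star$ with $\phi^\star(0)\in\mathcal{A}$. Continuity of $|\cdot|_{\mathcal{A}}$ and passage to the limit in $|\phi_i(t_i^\star)|_{\mathcal{A}}=\varepsilon$ produces $|\phi^\star(t^\star)|_{\mathcal{A}}=\varepsilon$ for some $t^\star\in[0,T]$, contradicting strong forward invariance of $\mathcal{A}$.

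Next I would upgrade this local bound to the required $\mathcal{K}_\infty$ estimate by setting
\begin{equation*}
\rho(r):=\sup\{|\phi(t)|_{\mathcal{A}}:\phi\text{ solution of }\eqref{eq:AppDiffInc},\;|\phi(0)|_{\mathcal{A}}\leq r,\;t\in\dom\phi\}.
\end{equation*}
For each $r>0$, $\rho(r)<\infty$: uniform global attractivity confines every such solution to $\mathcal{A}+\mathbb{B}$ after a uniform time $T(r)$, and on $[0,T(r)]$ the same compactness argument as above forbids escape. The previous step ensures $\rho(r)\to 0$ as $r\downarrow 0$, so majorizing $\rho$ by a continuous, strictly increasing, unbounded function produces the desired $\alpha\in\mathcal{K}_\infty$. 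Combined with the attractivity hypothesis, this is UGAS in the sense of Definition~\ref{def:Chap1:UGAS}.

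The main technical obstacle is the extraction of the limit solution $\phi^\star$ in the contradiction argument, and this is precisely the role of the basic conditions: without outer semicontinuity and convex, bounded images of $F$, graphical limits of solutions need not be solutions, and the clash with strong forward invariance cannot be produced. This closure property of the solution set is the content of \cite[Proposition~7.5]{goebel2012hybrid}, while \cite[Proposition~3]{teel2010smooth} packages it with the attractivity and invariance data into the $\mathcal{KL}$-bound that characterises UGAS.
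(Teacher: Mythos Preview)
Your argument is correct and, in substance, proves exactly what the paper needs; the difference is purely one of packaging. The paper's own proof is three lines of citations: it invokes \cite[Proposition~7.5]{goebel2012hybrid} to conclude (Lyapunov) stability of $\mathcal{A}$ from compactness, strong forward invariance, and uniform attractivity under the basic conditions, then \cite[Proposition~3]{teel2010smooth} to upgrade stability plus attractivity to a $\mathcal{KL}$ estimate, and finally \cite[Proposition~1]{teel2010smooth} to identify that $\mathcal{KL}$ bound with UGAS. You instead unpack this machinery by hand: the contradiction argument with first exit times, Arzel\`a--Ascoli, and graphical closure of solutions is precisely the mechanism behind \cite[Proposition~7.5]{goebel2012hybrid}, and your construction of $\rho(r)$ and its $\mathcal{K}_\infty$ majorant is the explicit content of the $\mathcal{KL}$ step. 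What the paper's route buys is brevity and a clean deferral to well-localised results; what yours buys is a self-contained argument that makes transparent \emph{why} the basic conditions are indispensable---they are exactly what forces the graphical limit $\phi^\star$ to be a solution. One small point: your justification that $\rho(r)<\infty$ (``the same compactness argument as above forbids escape'') is terse; the analogous first-exit-time construction now yields, in the limit, an incomplete maximal solution, contradicting the completeness built into the uniform attractivity hypothesis---it would be worth saying this explicitly.
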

\begin{proof}
	Due to the properties required for $F$ in the statement of the above result, since $\mathcal{A}$ is compact, strongly forward invariant, and uniformly globally attractive for~ \eqref{eq:AppDiffInc}, thanks to \cite[Proposition 7.5.]{goebel2012hybrid}, it follows that $\mathcal{A}$ is stable\footnote{See, \eg, \cite[Proposition 3]{teel2010smooth} for a standard definition of $\varepsilon,\delta$ stability of a compact set.} for \ \eqref{eq:AppDiffInc}. Moreover, by the virtue of \cite[Proposition 3]{teel2010smooth}, it follows that there exists a $\beta\in\mathcal{KL}$ such that for every maximal solution $\varphi$ to \eqref{eq:AppDiffInc},  one has for every $t\in\R_{\geq 0}$,
	$$
	\vert \varphi(t)\vert_{\mathcal{A}}\leq \beta(\vert\varphi(0)\vert_{\mathcal{A}},t)
	$$
	which in turn, due to  \cite[Proposition 1]{teel2010smooth}, implies that $\mathcal{A}$ is UGAS for  \eqref{eq:AppDiffInc}, and this finishes the proof.
\end{proof}
\begin{proposition}
	\label{lem:Chap1:Aux}
	Consider system \eqref{eq:AppDiffInc} and assume that $F$ satisfies the basic conditions on $\R^n$.
	Assume that there exists a continuously differentiable function $V\colon \R^n\rightarrow\R$ such that
	\begin{align}
	&V(x)>0\qquad \forall x\neq 0\\
	&\lim_{\Vert x \Vert \rightarrow\infty} V(x)=\infty
	\end{align}
	and two positive real scalars $\rho$, and $\alpha$ such that
	\begin{align}
	&\langle \nabla V(x), f\rangle \leq-\rho V (x) \qquad \forall x\in\mathcal{L}^{+}_\alpha(V), f\in F(x)
	\label{eq:Preli:LyapProof3}
	\end{align}
	where $\mathcal{L}^{+}_\alpha(V)\coloneqq \{x\in\R^n\colon V(x)\geq \alpha\}$. Then, the set $\mathcal{A}\coloneqq \R^n\setminus \Int\mathcal{L}^{+}_\alpha(V)$ is UGAS for \eqref{eq:AppDiffInc}.
\end{proposition}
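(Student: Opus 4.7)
The plan is to verify the three hypotheses of Proposition~\ref{prop:Chap1:UGAS}: compactness of $\mathcal{A}$, strong forward invariance of $\mathcal{A}$ under \eqref{eq:AppDiffInc}, and uniform global attractivity of $\mathcal{A}$; the basic conditions on $F$ are already assumed. A useful preliminary observation is that \eqref{eq:Preli:LyapProof3} forces $\nabla V(x)\neq \0$ on $\mathcal{L}^+_\alpha(V)$ (otherwise evaluating at any $f\in F(x)\neq\emptyset$ produces the impossible inequality $0\leq -\rho\alpha$), so $\{V=\alpha\}$ contains no local minima of $V$ and hence $\mathcal{A}=\{x\in\R^n:V(x)\leq\alpha\}$. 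This set is closed by continuity of $V$ and bounded by radial unboundedness of $V$, so $\mathcal{A}$ is compact.

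For strong forward invariance, I would argue by contradiction on the value of $V$ along solutions. If some solution $\phi$ with $\phi(0)\in\mathcal{A}$ ever satisfied $V(\phi(t_1))>\alpha$ for some $t_1\in\dom\phi$, continuity of $t\mapsto V(\phi(t))$ would furnish a last time $t_0<t_1$ with $V(\phi(t_0))=\alpha$ and $V(\phi(t))\geq\alpha$ on $[t_0,t_1]$, i.e.\ $\phi(t)\in\mathcal{L}^+_\alpha(V)$ there. Absolute continuity of $\phi$ together with \eqref{eq:Preli:LyapProof3} yields $\frac{d}{dt}V(\phi(t))\leq -\rho V(\phi(t))$ for almost every $t\in[t_0,t_1]$, and a Gronwall comparison gives $V(\phi(t_1))\leq \alpha e^{-\rho(t_1-t_0)}\leq\alpha$, a contradiction. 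Hence $V(\phi(t))\leq\alpha$ throughout $\dom\phi$, so $\phi$ is trapped in a compact sublevel set of $V$; combined with the basic conditions on $F$ this rules out finite-time escape and yields completeness of every maximal solution starting in $\mathcal{A}$.

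For uniform global attractivity, fix $\mu>0$ and set $M(\mu):=\sup\{V(x):\vert x\vert_{\mathcal{A}}\leq\mu\}$, which is finite by continuity and radial unboundedness of $V$. The same comparison argument, applied on the portion of any solution $\phi$ with $\vert\phi(0)\vert_{\mathcal{A}}\leq\mu$ where $V(\phi(t))\geq\alpha$, gives $V(\phi(t))\leq M(\mu)e^{-\rho t}$. Consequently, after time $T:=\rho^{-1}\max\{0,\ln(M(\mu)/\alpha)\}$ the trajectory satisfies $V(\phi(t))\leq\alpha$, hence belongs to $\mathcal{A}$, and stays there by invariance. In particular $\vert\phi(t)\vert_{\mathcal{A}}=0$ for all $t\geq T$ in $\dom\phi$, which is stronger than what uniform global attractivity requires. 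Proposition~\ref{prop:Chap1:UGAS} then delivers UGAS of $\mathcal{A}$.

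The main technical delicacy I foresee is the rigorous handling of the chain rule along Krasovskii-type solutions: inequality \eqref{eq:Preli:LyapProof3} holds only for $x\in\mathcal{L}^+_\alpha(V)$ and only over elements $f\in F(x)$, so the differential comparison on $V(\phi(\cdot))$ must be interpreted in the almost-everywhere sense appropriate for absolutely continuous trajectories and must be localized---via the level-crossing argument above---to sub-intervals on which $V(\phi(t))\geq\alpha$. Once this step is handled carefully, the Gronwall bound together with the invariance and attractivity conclusions become routine.
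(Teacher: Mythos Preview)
Your proof is correct and follows essentially the same route as the paper: compactness of $\mathcal{A}$, strong forward invariance via a Gronwall/level-crossing argument, finite-time convergence into $\mathcal{A}$, and then Proposition~\ref{prop:Chap1:UGAS}. The paper packages the attractivity step through Lemma~\ref{lemma:Prel:FinitTime} (defining $\Upsilon(w)=\rho^{-1}\ln(V(w)/\alpha)$) whereas you argue it directly by taking $M(\mu)=\sup\{V(x):\vert x\vert_{\mathcal{A}}\leq\mu\}$, and your observation that $\nabla V\neq\0$ on $\mathcal{L}^+_\alpha(V)$ forces $\mathcal{A}=\{V\leq\alpha\}$ is a useful clarification the paper leaves implicit.
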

The proof of the above result rests on the following lemma.
\begin{lemma}
\label{lemma:Prel:FinitTime}
Let $\mathcal{A}\subset\R^n$ be compact. If there exists a locally bounded function $\Upsilon\colon\mathbb{R}^n\rightarrow\R_{\geq 0}$ such that for each $\mu>0$ every maximal solution $\varphi$ to \eqref{eq:AppDiffInc} with $\varphi(0)\in\mathcal{A}+\mu\mathbb{B}$ is complete and $t\geq\Upsilon(x_0)$ implies $\varphi(t)\in\mathcal{A}$. Then, $\mathcal{A}$ is globally uniformly attractive for  \eqref{eq:AppDiffInc}. \QEDB
\end{lemma}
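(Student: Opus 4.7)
The plan is to turn the pointwise finite-time reaching property carried by $\Upsilon$ into a single, uniform hitting time $T=T(\mu)$ valid for all initial conditions at distance at most $\mu$ from $\mathcal{A}$, and then observe that once $\varphi(t)\in\mathcal{A}$ the distance bound is trivially satisfied for any $\varepsilon>0$.

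First I would fix $\mu>0$ and consider the closed set $\mathcal{A}+\mu\overline{\mathbb{B}}$. Because $\mathcal{A}$ is compact and $\overline{\mathbb{B}}$ is compact, their Minkowski sum is a compact subset of $\R^n$. Local boundedness of $\Upsilon$ supplies, around each point of this set, an open neighborhood on which $\Upsilon$ is bounded; extracting a finite subcover and taking the maximum of the finitely many local bounds yields a number $T_\mu\geq 0$ such that $\Upsilon(x)\leq T_\mu$ for every $x\in\mathcal{A}+\mu\overline{\mathbb{B}}$.

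Second, for an arbitrary $\varepsilon>0$ I would set $T:=T_\mu$ (note that $T$ does not actually depend on $\varepsilon$, which is permitted by Definition~\ref{def:Chap1:UGAS}). For any maximal solution $\varphi$ to \eqref{eq:AppDiffInc} with $|\varphi(0)|_{\mathcal{A}}\leq\mu$, completeness follows from the hypothesis applied with this $\mu$, and the inequality $t\geq T\geq \Upsilon(\varphi(0))$ forces $\varphi(t)\in\mathcal{A}$, whence $|\varphi(t)|_{\mathcal{A}}=0\leq\varepsilon$. Completeness of \emph{every} maximal solution of \eqref{eq:AppDiffInc} comes for free, since any initial condition $x_0\in\R^n$ belongs to $\mathcal{A}+\mu\overline{\mathbb{B}}$ for $\mu$ large enough. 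Combining these facts gives exactly the definition of uniform global attractivity of $\mathcal{A}$.

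The only nontrivial step is the compactness-cover argument used to uniformize the bound on $\Upsilon$: local boundedness by itself would not suffice to rule out $\sup_{|x|_\mathcal{A}\leq\mu}\Upsilon(x)=\infty$, and it is exactly the compactness of $\mathcal{A}$ that converts $\mathcal{A}+\mu\overline{\mathbb{B}}$ into a compact set where the local bounds can be glued together. Everything else is a direct verification against the definition.
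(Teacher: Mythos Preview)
Your proposal is correct and follows essentially the same approach as the paper's own proof: take $T=\sup_{x\in\mathcal{A}+\mu\mathbb{B}}\Upsilon(x)$, argue that this supremum is finite by local boundedness of $\Upsilon$ on the compact set $\mathcal{A}+\mu\mathbb{B}$, and conclude. You simply spell out the compactness-cover argument and the completeness clause more explicitly than the paper does.
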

\begin{proof}
	The proof is straightforward. Pick $\mu>0$ and define
	$$
	\xi=\underset{x\in \mathcal{A}+\mu\mathbb{B}}\sup\Upsilon(x)
	$$
Then, since $\Upsilon$ is locally bounded and $\mathcal{A}$ is compact $\xi$ is finite. To conclude, notice that for each maximal solution $\varphi$ to \eqref{eq:AppDiffInc} with $\varphi(0)\in\mathcal{A}+\mu\mathbb{B}$, one has that $t\geq \xi$ implies $\varphi(t)\in\mathcal{A}$ and this concludes the proof.
\end{proof}

Now we are in position to show the proof of Proposition~\ref{lem:Chap1:Aux}.

\begin{proof}[Proof of Proposition~\ref{lem:Chap1:Aux}]
	First observe that since $V$ is radially unbounded, $\mathcal{A}$ is compact.
	To prove that the set $\mathcal{A}$ is UGAS, we firstly show that $\mathcal{A}$ is strongly forward invariant for  \eqref{eq:AppDiffInc} and that each maximal solution to \eqref{eq:AppDiffInc} is complete.
	
	Concerning strongly forward invariance, since $\mathcal{A}$ is compact, thanks to the properties required for $F$, from \cite[Proposition 6.10.]{goebel2012hybrid}, it suffices to show that each maximal solution from $\mathcal{A}$ cannot leave such a set, \ie, completeness of such solutions automatically holds. On the other hand, this follows from \eqref{eq:Preli:LyapProof3}. 
In particular, the satisfaction of \eqref{eq:Preli:LyapProof3} ensures that every maximal solution $\varphi$ to  \eqref{eq:AppDiffInc} with $\varphi(0)\notin\mathcal{A}$ cannot leave the sublevel set $\mathcal{L}^{-}_{V(\varphi(0))}(V)\coloneqq\{x\in\R^n\colon V(x)\leq V(\varphi(0))\}$. 
	Hence, since sublevel sets of $V$ are compact, it follows that every maximal solution to \eqref{eq:AppDiffInc} is bounded and then complete. Hence strong forward invariance of $\mathcal{A}$ and completeness of maximal solutions to   \eqref{eq:AppDiffInc} are proven.
	
Bearing in mind completeness of maximal solutions to \eqref{eq:AppDiffInc} and strong forward invariance of $\mathcal{A}$, we conclude the proof of the above result by showing that maximal solutions to \eqref{eq:AppDiffInc} converge in finite time into $\mathcal{A}$.
Pick any maximal solution $\varphi$ to \eqref{eq:AppDiffInc}, with $\varphi(0)\in \Int\Theta$.
Let $\mathcal{T}=\{t\in\R_{\geq 0}\colon \varphi(t)\in\mathcal{A} \}$, since $\mathcal{A}$ is strongly forward invariant, either $\mathcal{T}=\emptyset$ or $\sup\mathcal{T}=+\infty$. In other words, if $\varphi$ eventually enters $\mathcal{A}$, then by strong forward invariance, it cannot leave such a set.
	
By contradiction, let us suppose that  $\mathcal{T}=\emptyset$, then for every $t\in\R_{\geq 0}$, $\varphi(t)\notin\mathcal{A}$. 
Therefore, still from \eqref{eq:Preli:LyapProof3}, it follows that
	\begin{equation}
	\label{eq:Chap1:Bound}
	V(\varphi(t))\leq e^{-\rho t} V(\varphi(0))\qquad \forall t\in\R_{\geq 0}.
	\end{equation}
	Pick $t\geq\frac{1}{\rho}\ln\left(V(\varphi(0))\frac{1}{\alpha}\right)$, from \eqref{eq:Chap1:Bound} one gets
	$
	V(\varphi(t))\leq \alpha 
	$
	that is $\varphi(t)\in\mathcal{A}$, but this contradicts the fact that $\mathcal{T}=\emptyset$. Now, for every $w\in\R^n$, define
	$$
	\Upsilon(w)\coloneqq\begin{cases}0& w\in\mathcal{A}\\
	\frac{1}{\rho}\ln\left(V(w)\frac{1}{\alpha}\right)&w\notin\mathcal{A}
	\end{cases}
	$$
	Notice that $\Upsilon$ is continuous on $\R^n$, hence locally bounded.  Moreover, for every maximal solution $\phi$ to  \eqref{eq:AppDiffInc}, $t\geq\Upsilon(\phi(0))$ implies that $\phi(t)\in\mathcal{A}$. Then, since every maximal solution to  \eqref{eq:AppDiffInc} is complete, from Lemma~\ref{lemma:Prel:FinitTime} it follows that $\mathcal{A}$ is globally uniformly attractive for \eqref{eq:AppDiffInc}. 
Now, $\mathcal{A}$ is compact, strongly forward invariant, and globally uniformly attractive for \eqref{eq:AppDiffInc}, from Proposition~\ref{prop:Chap1:UGAS} it follows that $\mathcal{A}$ is UGAS for  \eqref{eq:AppDiffInc} and this finishes the proof.
\end{proof}
\section{Problem statement}
\label{sec:ProbStatement}
\subsection{System Description}
Consider the following continuous-time linear system
\begin{equation}
\label{eq:Chap1:QuantizedState}
\dot{x}=Ax+Bu
\end{equation}
where $x\in\R^n$ and $u\in\R^m$ are, respectively, the state and the input of the system while $A\in\R^{n\times n}$ and $B\in\R^{n\times m}$. We assume that the state of the plant \eqref{eq:Chap1:QuantizedState} is measured in a quantized fashion. In particular, let us denote by $x_m$ the measurement of the plant state. Then, we assume that $x_m=\q(x)$, where $\q$ is the \emph{quantizer}, that is a map $q\colon\R^n\rightarrow\mathcal{Q}$, where $\mathcal{Q}\subset\R^{n}$ is a countable set.   
In particular, let $\delta_i$, for $i=1,2,\dots, n$, be some positive given scalars, in this work, we consider the following ``uniform'' quantizer $q\colon\R^n\rightarrow \delta_1\mathbb{Z}\times\delta_2\mathbb{Z}\times\dots\delta_n\mathbb{Z}$, for which 
\begin{equation}
\begin{aligned}
&\q_i(x)\coloneqq \delta_i\Sign(x_i)\left\lfloor \frac{\vert x_i \vert}{\delta_i}\right\rfloor\qquad i=1,2,\dots,n
\end{aligned}
\label{eq:Chap1:Uniformquantizer}
\end{equation}
the scalars $\delta_i$, for $i=1,2,\dots, n$, characterize the quantization error bounds, \ie, for every $x\in\R^n$ and $i=1,2,\dots, n$, $\vert\q(x_{(i)})-x_{(i)}\vert\leq\delta_i$. It is worthwhile to remark that for each $i=1,2,\dots, n$, $\q_i([-\delta_i,  \delta_i])=\{0\}$. In the sequel, we define $\Delta=(\delta_1,\delta_2,\dots, \delta_n)$.

Assume now that the measurement of the state $x_m$ is utilized in a static feedback control scheme to stabilize the plant \eqref{eq:Chap1:QuantizedState}. More specifically, let $K\in\R^{m\times n}$ and let us consider the following control law $u=Kx_m$; see \figurename~\ref{fig:scheme_cl}. The objective of this paper is to study the stability of the resulting closed-loop system, which can be modeled as follows
\begin{equation}
\label{eq:Chap1:QuantizedState_CL}
\dot{x}=Ax+BK\q(x)
\end{equation}
	
\begin{figure}[!ht]
		\centering
		\psfrag{xm}[][][1]{$x_m$}
		\psfrag{x}[][][1]{$x$}
		\psfrag{q}[][][1]{$\q(\cdot)$}
		\psfrag{K}[][][1]{$K$}
		\psfrag{P}[][][1.2]{$\dot{x}=Ax+Bu$}
		\psfrag{u}[][][1.1]{$u$}
		\includegraphics[scale=0.8]{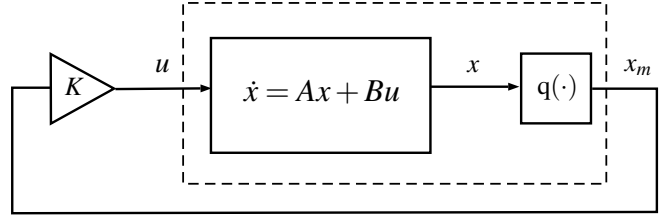}
		\caption{The closed-loop system.}
		\label{fig:scheme_cl}
	\end{figure}
As extensively illustrated in the literature of quantized control systems; see, \eg, \cite{brockett2000quantized,lib/book2003, SoFred,FerranteAutomatica2015}, due to the dead-zone effect induced by the quantizer $\q$, if the matrix $A$ is not Hurwitz, then the origin of \eqref{eq:Chap1:QuantizedState_CL} cannot be rendered asymptotically stable via any choice of the gain $K$. 
Nevertheless, as shown in \cite{lib/book2003,SoFred}, it turns out that if the matrix $A+BK$ is Hurwitz, then the  closed-loop trajectories converge in finite time into a compact set $\mathcal{A}$ containing the origin. 
Building on the sector conditions for the uniform quantizer presented in \cite{FerranteAutomatica2015,Ferrante2014ECC}, one first goal of this paper consists of deriving computationally tractable conditions aimed at providing a systematic characterization of the set $\mathcal{A}$ starting from the data of the closed-loop system (\emph{stability analysis problem}). Moreover, we show that the set $\mathcal{A}$ is globally uniformly asymptotically stable for the closed-loop system. 
Such a property is relevant as, thanks to the results in \cite{goebel2012hybrid,teel2010smooth}, it is semiglobally robustly preserved in the presence of small perturbations like measurement noise.
As a second problem, we address the design problem of a stabilizing gain $K$ for \eqref{eq:Chap1:QuantizedState_CL} aimed at reducing the effect of the quantization on the closed-loop system by reducing the ``size'' of the set $\mathcal{A}$ (\emph{stabilization problem}). 

Define the function, 
\begin{equation}
\label{eq:Chap1:Psi}
\begin{aligned}
\Psi\colon&\mathbb{R}^n\rightarrow\R^n\\
&x\mapsto \q(x)-x
\end{aligned}
\end{equation}
which maps $x$ into the quantization error introduced by the quantizer in \eqref{eq:Chap1:Uniformquantizer}. Then, the closed-loop system can be rewritten as follows.
\begin{equation}
\label{eq:Chap1:QuantizedStateCL}
\dot{x}=(A+BK)x+BK\Psi(x)
\end{equation}
Observe that being the function $\Psi$ discontinuous, \eqref{eq:Chap1:QuantizedStateCL} is a discontinuous right-hand side differential equation. The notion of solution to be considered in studying such a class of systems needs to be carefully chosen; see \cite{ceragioli2007discontinuous}. One notion usually adopted in the literature of discontinuous dynamical systems is the notion of Carath\'eodory solution, which is given as follows.
\begin{definition}
	Let $\mathbb{I}\subset\R_{\geq 0}$ be an interval containing $0$. An absolutely continuous function $\phi\colon \mathbb{I}\rightarrow \R^n$ is a  Carath\'eodory solution to \eqref{eq:Chap1:QuantizedStateCL} if for almost every $t\in \mathbb{I}$, one has 
	$$\dot{\phi}(t)=(A+BK)\phi(t)+BK\Psi(\phi(t))$$
\end{definition}
The above definition does not insist on the differentiability of the solution and on the fact that \eqref{eq:Chap1:QuantizedStateCL} needs to be satisfied on the whole domain of the solution. 
However, in some cases, the notion of solution due to Carath\'eodory is not weak enough to guarantee the existence of solutions to the considered class of systems. To understand the relevance of this issue, let us consider the following example.
\begin{example}
	\label{ex:Example0}
	Consider the static state feedback control system with quantized sensor from \cite{frid:dam/auto09} that is defined by the following data:
	$$
	A=\begin{bmatrix}
	0 &1\\
	0.5& 0.5
	\end{bmatrix}, B=\begin{bmatrix}
	1\\1
	\end{bmatrix}, \delta_1=\delta_2=1, K=\begin{bmatrix}
	-0.3491 &-0.7022
	\end{bmatrix}
	$$
	We show that there exist no Carath\'eodory solutions to the closed-loop system from each point belonging to the set $\mathcal{S}=\{x\in\mathbb{R}^2\colon x_1=x_2=k, k=\pm1,\pm 2,\dots,\pm 20\}$. Pick any $k=1,2,\dots, 20$. 
	Define $U_k(x)=\frac{1}{2}\left((x_1-k)^2+(x_2-k)^2\right)$, $\mathcal{Q}=\{x\in\mathbb{R}^2\colon x_1=x_2\}$ and observe that there exists $\varepsilon_k>0$ small enough such that
	\begin{equation}
	\label{eq:Uk}
	\langle \nabla U_k(x),Ax+BK\q(x)\rangle\leq 0 \qquad\forall x\in\left(k\1+\varepsilon_k\mathbb{B}\right)\cap\mathcal{Q}
	\end{equation}
	Moreover, observe that solutions from $\mathcal{Q}$ cannot leave such a set. To prove this claim, it suffices to notice that $W(x)=\frac{1}{2}(x_1-x_2)^2=\vert x\vert_\mathcal{Q}^2$ is such that $\langle\nabla W(x),Ax+BK\q(x)\rangle=-\vert x\vert_\mathcal{Q}^2$ for every $x\in\R^2$. 
	
	Now, assume that there exists a Carath\'eodory solution $t\mapsto\phi(t)=(\phi_1(t),\phi_2(t))$ to the closed-loop system, such that  $\phi(0)\in\mathcal{S}$. Since $\mathcal{S}\subset\mathcal{Q}$ and solutions from $\mathcal{Q}$ cannot leave such a set,  it follows that $\range\phi\subset\mathcal{Q}$. 
	Now, observe that since $\phi(t)$ is right continuous at $t=0$, and by assumption $\phi(0)=k\1$, there exists 
	$T^\star\in\dom\phi$ small enough such that $\range\phi_{T^\star} \subset\left(k\1+\varepsilon_k\mathbb{B}\right)\cap \mathcal{Q}$, where  $\dom\phi_{T^\star}=[0,T^{\star}]$ and for each $t\in\dom\phi_{T^\star}$, $\phi_{T^\star}(t)=\phi(t)$. 
	Therefore, since $U_k$ is continuously differentiable on $\R^2$ and $\phi$ is absolutely continuous, from \eqref{eq:Uk} it follows that 
	\begin{equation}
	\label{eq:IntUk}
	U_k(\phi(t))=\int_0^{t}\frac{dU(\phi(s))}{ds}ds=\int_0^{t} \langle\nabla U(\phi(s)), A\phi(s)+BK\q(\phi(s))\rangle ds\leq 0\qquad \forall t\in[0,T^{\star}]
	\end{equation}
	Where the above integral has to be considered as a Lebesgue integral.
	Therefore, being $U_k$ positive definite with respect to\footnote{Given a function $V\colon\R^n\rightarrow\R$ and a set $\mathcal{A}\subset\R^n$, we say that $V$ is positive definite with respect to $\mathcal{A}$, if $V(x)=0$ for each $x\in\mathcal{A}$ and for each $x\notin\mathcal{A}$, $V(x)>0$.} $\{(k,k)\}$, it follows that
	$U_k(\phi(t))=0$ for all $t\in[0,T^{\star}]$, \ie, $\phi(t)=k\1$ for all $t\in[0,T^{\star}]$.
	But this contradicts the fact that $\phi$ is a Carath\'eodory solution to the closed-loop system, since 
	$A\phi(t)+BK\q(\phi(t))\neq 0$ for all $t\in[0,T^{\star}]$.
\end{example}
To overcome this shortcoming, in this paper we consider Krasovskii solutions to \eqref{eq:Chap1:QuantizedStateCL}. In particular, Krasovskii solutions are defined as follows for a general differential equation of the form 
\begin{equation}
\label{eq:ODE}
\dot{x}=X(x)\qquad x\in\R^n,  X\colon\R^n\rightarrow\R^n
\end{equation}
\begin{definition}[Krasovskii solution \cite{Hajek:1979aa}]
	\label{def:Chap1:Krasovskii}
	For each $x\in\R^n$, let us define  the following set-valued mapping
	$$
	\mathcal{K}[X](x)\coloneqq\bigcap_{\delta>0}\ch X(x+\delta\mathbb{B})
	$$
	where $\mathbb{B}$ is the closed unitary ball in $\R^n$.
	
	Let $\mathbb{I}\subset\R_{\geq 0}$ be an interval containing $0$. An absolutely continuous function $\phi\colon\mathbb{I}\rightarrow \mathbb{R}^n$ is a Krasovskii solution to \eqref{eq:ODE} if 
	$$
	\dot{\phi}(t)\in \mathcal{K}[X](\phi(t))\qquad \text{for almost all }\,\,t\in\mathbb{I}
	$$
\end{definition}

\begin{remark}
Looking again at Example~\ref{ex:Example0}, it turns out that Krasovskii solutions from $\mathcal{S}$ do exist and are unique. In particular, for each $x_0\in\mathcal{S}$, $\phi(t)=x_0$ with $\dom\phi=\R_{\geq 0}$  is a (maximal) Krasovskii solution. In other words, the set $\mathcal{S}$ is made of Krasovskii equilibria for the considered system.
\end{remark}
Three main reasons encourage to focus on Krasovskii solutions in control problems. The first one is that Krasovskii solutions exist under very mild requirements, in particular if $X$ is locally bounded, then for every $x_0\in\mathbb{R}^n$, there exists at least a Krasovskii solution $\phi$ to \eqref{eq:ODE}, such that $\phi(0)=x_0$; see \cite{Hajek:1979aa}.
The second one is that, whenever they exist, Carath\'eodory solutions are Krasovskii solutions, then any conclusion drawn on Krasovskii solutions also holds for Carath\'eodory solutions. 
The third one is that, as shown in \cite[Corollary 5.6.]{Hajek:1979aa}, (and also more recently in \cite[Theorem 4.3.]{goebel2012hybrid}), Krasovskii solutions have connections with the perturbed dynamics of \eqref{eq:ODE}. Thus, results established on Krasovskii solutions are (practically-semiglobally) preserved in the presence of small perturbations.

For each $x\in\R^n$, define
$$Z=(A+BK)x+BK\Psi(x)$$ 
From now on, we shall denote as closed-loop system the following differential inclusion
\begin{subequations}
	\label{eq:Chap1:Kraso}
	\begin{equation}
	\dot{x}\in\mathcal{K}[Z](x)
	\end{equation}
	whose solutions are the Krasovskii solutions to \eqref{eq:Chap1:QuantizedStateCL}. In particular, by following similar arguments to those in \cite{Paden:1987aa}, it can be readily shown that for each $x\in\R^n$
	\begin{equation}
	\mathcal{K}[Z](x)=(A+BK)x+BK\mathcal{K}[\Psi](x).
	\end{equation}
\end{subequations}
Now we are in position to state the problems we solve in this paper.
\begin{problem}(Stability analysis)
	\label{prob:Chap1:StabilityQuantizedState}
	Let $A\in\R^{n\times n}, B\in\R^{n\times m}$, and $K\in\R^{m\times n}$ be given. Determine a compact set $\mathcal{A}\subset\mathbb{R}^n$ containing the origin, such that $\mathcal{A}$ is UGAS for system \eqref{eq:Chap1:Kraso}.
\end{problem}
\begin{problem}(Stabilization)
	\label{prob:Chap1:StabilizationQuantizedState}
	Let $A\in\R^{n\times n}$ and $B\in\R^{n\times m}$ be given. Determine a gain $K\in\R^{m \times n}$ and a compact set $\mathcal{A}\subset\mathbb{R}^n$ containing the origin, such that $\mathcal{A}$ is UGAS for system \eqref{eq:Chap1:Kraso}.
\end{problem}
\begin{remark}
	Following the lines of existing works on quantized systems (\cite{lib/book2003,SoFred}), a  seemingly similar stability notion could be used in the framework addressed by our paper, that is the notion of global ultimate boundedness with respect to a compact set $\mathcal{A}$; see \cite{Khalil}. 
However,  it is worthwhile to remark that global ultimate boundedness  {\itshape a priori} does not guarantee Lyapunov stability of the considered compact set $\mathcal{A}$. Thus, trajectories starting near $\mathcal{A}$ could dramatically deviate from it. This is obviously impossible whenever one focuses on uniform asymptotic stability. Moreover, as shown in \cite{goebel2012hybrid, teel2010smooth}, uniform global asymptotic stability of compact sets is robust with respect to small perturbations.  
\end{remark}
\section{Main results}
\label{sec:MainResults}
First, let us consider the following preliminary results inspired by \cite{Ferrante2014ECC}.
\begin{lemma}
\label{lem:sec1}
Let $z\in\mathbb{R}^n$ and $S_1\in\Dpn$. Then, the following relation holds
\begin{equation}
\label{eq:Chap1:Sector1.0}
\Psi(z)\tr S_1\Psi(z)-\Delta\tr S_1\Delta\leq 0
\end{equation}
\end{lemma}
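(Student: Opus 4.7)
The plan is to exploit two facts already stated in the paper: first, the componentwise bound $|\q_i(x)-x_i|\le \delta_i$ for all $x\in\R^n$ and all $i=1,2,\dots,n$, which is noted right after the definition of the uniform quantizer in \eqref{eq:Chap1:Uniformquantizer}; second, the diagonal structure of $S_1\in\Dpn$, which turns the quadratic form $\Psi(z)\tr S_1\Psi(z)$ into a pure weighted sum of squares with positive weights. Combined, these two observations reduce the inequality to a term-by-term comparison.

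Concretely, I would write $S_1=\Diag\{s_1,s_2,\dots,s_n\}$ with $s_i>0$ and observe that, by the definition of $\Psi$ in \eqref{eq:Chap1:Psi} together with the componentwise error bound, one has $\Psi_i(z)^2\le \delta_i^2$ for every $i$ and every $z\in\R^n$. Multiplying each such inequality by the corresponding positive weight $s_i$ and summing over $i$ yields
\begin{equation*}
\Psi(z)\tr S_1\Psi(z)=\sum_{i=1}^{n} s_i\,\Psi_i(z)^2 \le \sum_{i=1}^{n} s_i\,\delta_i^2 = \Delta\tr S_1\Delta,
\end{equation*}
which is exactly \eqref{eq:Chap1:Sector1.0}.

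There is no real obstacle here; the statement is essentially a re-encoding, in quadratic form, of the uniform per-coordinate quantization error bound, and the hypothesis that $S_1$ be diagonal positive definite is precisely what is needed so that no cross terms appear and the inequality can be carried out coordinate by coordinate. The only point deserving care is noting that this reasoning does not extend to a general $S_1\in\Spn$, since with off-diagonal entries the term-by-term bounding would fail; this justifies the restriction $S_1\in\Dpn$ in the statement.
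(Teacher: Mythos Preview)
Your proof is correct and follows essentially the same approach as the paper: both exploit the componentwise bound $\Psi_i(z)^2\le\delta_i^2$, multiply by the positive diagonal entries of $S_1$, and sum. Your additional remark on why diagonality of $S_1$ is needed is a nice clarification but not required for the argument.
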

\begin{proof}
The proof is straightforward. Pick any $z\in\R^n$, then, since for each $i=1,2,\dots,n$, $\vert \Psi_i(z)\vert\in[-\delta_i, \delta_i]$, one has that for any $S_1\in\Dpn$ and each $i=1,2,\dots,n$, $S_{1_{i,i}}\Psi_i(z)^2\leq S_{1_{i,i}}\delta_i^2$. Summing up on $i=1,2,\dots, n$ one gets \eqref{eq:Chap1:Sector1.0}.
\end{proof}

\begin{lemma}
\label{lem:Chap1:SectorKrasovskii}
Let $z\in\mathbb{R}^n$, $v\in\mathcal{K}[\Psi](z)$, and $S_1, S_2\in\Dpn$. Then, the following relations hold:
\begin{align}
\label{eq:Chap1:Sector1}
&v\tr S_1v-\Delta\tr S_1\Delta\leq 0\\ 
\label{eq:Chap1:Sector2}
&v\tr \,S_2\,(v+z)\leq 0
\end{align}
\end{lemma}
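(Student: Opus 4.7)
The two bounds have rather different natures, and I plan to treat them separately. For \eqref{eq:Chap1:Sector1}, my approach is to lift the pointwise bound supplied by Lemma~\ref{lem:sec1} through the Krasovskii regularization via a fixed-set containment. Since $S_1\in\Dpn$, the set $\mathcal{E}:=\{u\in\R^n\colon u\tr S_1 u\leq \Delta\tr S_1\Delta\}$ is closed and convex (it is an ellipsoid). Lemma~\ref{lem:sec1} precisely asserts that $\Psi(w)\in\mathcal{E}$ for every $w\in\R^n$, so $\Psi(z+\delta\mathbb{B})\subset\mathcal{E}$ for every $\delta>0$. Closedness and convexity of $\mathcal{E}$ are then preserved under closed convex hull, giving $\ch\Psi(z+\delta\mathbb{B})\subset\mathcal{E}$, and intersecting over $\delta>0$ yields $\mathcal{K}[\Psi](z)\subset\mathcal{E}$, which is exactly~\eqref{eq:Chap1:Sector1}.

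Inequality \eqref{eq:Chap1:Sector2} is more delicate because the constraint set $\{u\in\R^n\colon u\tr S_2(u+z)\leq 0\}$ depends on $z$, so the fixed-set trick above no longer applies. I would proceed in three steps. First, note that $\Psi_i(w)+w_i=\q_i(w)$; a short case split on $\Sign(w_i)$ in the definition~\eqref{eq:Chap1:Uniformquantizer} shows that $\q_i(w)$ and $\q_i(w)-w_i=\Psi_i(w)$ always have opposite signs, so the scalar pointwise sector $\Psi_i(w)(\Psi_i(w)+w_i)\leq 0$ holds for every $w\in\R^n$ and every $i=1,\dots,n$. Second, since $S_2\in\Dpn$ is diagonal, the bilinear form in \eqref{eq:Chap1:Sector2} decouples across coordinates, so it is enough to show $v_i(v_i+z_i)\leq 0$ separately for each $i$. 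Third, I would fix $\delta>0$, pick an arbitrary convex combination $\tilde v=\sum_\ell\lambda_\ell\Psi(w_\ell)$ with $w_\ell\in z+\delta\mathbb{B}$, write $(w_\ell)_i=z_i+\varepsilon_\ell$ with $\vert\varepsilon_\ell\vert\leq\delta$, use the uniform bound $\vert\Psi_i(w_\ell)\vert\leq\delta_i$, and average the pointwise sector against the weights $\lambda_\ell$. Combining Jensen's inequality applied to the convex map $t\mapsto t^2$, namely $\tilde v_i^{\,2}\leq\sum_\ell\lambda_\ell\Psi_i(w_\ell)^2$, with the estimate $\bigl\vert\sum_\ell\lambda_\ell\Psi_i(w_\ell)\varepsilon_\ell\bigr\vert\leq\delta\,\delta_i$ for the cross term, I expect to obtain
\[
\tilde v_i(\tilde v_i+z_i)\leq \delta\,\delta_i.
\]
Continuity of this quadratic in $\tilde v_i$ then extends the bound to every $v\in\ch\Psi(z+\delta\mathbb{B})$, and since any $v\in\mathcal{K}[\Psi](z)$ belongs to this set for every $\delta>0$, sending $\delta\to 0^+$ delivers $v_i(v_i+z_i)\leq 0$. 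Multiplying by the nonnegative weights $(S_2)_{i,i}$ and summing over $i$ produces~\eqref{eq:Chap1:Sector2}.

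The hard part will be the third step: I need to calibrate Jensen's inequality against the $\varepsilon_\ell$-slack so that the error term vanishes as $\delta\to 0^+$, and to handle carefully the passage to the closed convex hull \emph{before} the intersection in $\delta$ is taken, since the convex-hull operation inevitably destroys the pointwise sector and must be recovered through the quadratic inequality and a limiting argument. Everything else reduces to Lemma~\ref{lem:sec1}, an elementary inspection of the scalar quantizer, and a standard convex-set containment.
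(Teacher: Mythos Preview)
Your argument for \eqref{eq:Chap1:Sector1} is correct and coincides with the paper's: both use that the ellipsoid is closed and convex, hence stable under closed convex hull. The paper reaches $\mathcal{K}[\Psi](z)$ through the characterization $\mathcal{K}[\Psi](z)=\Co\mathcal{L}(z)$ with $\mathcal{L}(z)=\{\lim\Psi(z_k)\colon z_k\to z\}$ (valid for locally bounded $\Psi$) rather than the intersection over $\delta$, but the content is the same.

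For \eqref{eq:Chap1:Sector2} your route is correct but different from the paper's, and more laborious than needed. Your diagnosis is also slightly off: the obstruction is not that the target set depends on $z$ ($z$ is fixed throughout), but that the pointwise sector reads $\Psi(w)\tr S_2(\Psi(w)+w)\leq 0$ with $w$, not $z$, in the affine part. The paper dissolves this mismatch by working with the sequential characterization: for $l=\lim\Psi(z_k)\in\mathcal{L}(z)$ one has simultaneously $z_k\to z$, so the pointwise inequality passes to the limit as $l\tr S_2(l+z)\leq 0$ with no error term whatsoever. Since $\mathcal{V}_2(z)=\{v\colon v\tr S_2(v+z)\leq 0\}$ is closed and convex (sublevel set of a convex quadratic, as $S_2\in\Dpn$), taking the convex hull of $\mathcal{L}(z)\subset\mathcal{V}_2(z)$ finishes immediately---so the ``fixed-set trick'' \emph{does} apply once one uses $\Co\mathcal{L}(z)$ in place of $\bigcap_\delta\ch\Psi(z+\delta\mathbb{B})$. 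Your approach, by contrast, stays with the raw definition and pays for it with the $O(\delta)$ slack; this has the merit of being fully self-contained (no appeal to the $\Co\mathcal{L}(z)$ identity), and the Jensen-plus-error computation you sketch does go through cleanly---the ``hard part'' you flag is routine. Note, however, that the coordinate decomposition is unnecessary: Jensen applies directly to the convex map $v\mapsto v\tr S_2 v$, and the aggregate cross term $\sum_\ell\lambda_\ell\Psi(w_\ell)\tr S_2(w_\ell-z)$ is bounded by $\Vert S_2\Vert\,\Vert\Delta\Vert\,\delta$, yielding $\tilde v\tr S_2(\tilde v+z)\leq \Vert S_2\Vert\,\Vert\Delta\Vert\,\delta$ in one stroke.
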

\begin{proof}
For each $z\in\mathbb{R}^n$, let us define the set 
$$\mathcal{L}(z)=\{\lim\Psi(z_k)\vert z_k\rightarrow z\}\subset \mathbb{R}^n$$
where $z_k$ is any sequence converging to\footnote{This notation is inherited by the seminal work of Paden and Sastry \cite{Paden:1987aa} presenting calculation rules for the Filippov  regularization.} $z$. 
Since $\Psi$ is locally bounded, from \cite{bacciotti2004stabilization} it turns out that for every  $z\in\mathbb{R}^n$,  $\mathcal{K}[\Psi](z)=\Co \mathcal{L}(z)$. Now, let us define the following closed set
$$\mathcal{V}_1=\{v\in\mathbb{R}^n\colon v\tr S_1v-\Delta\tr S_1\Delta\leq 0\}\subset\mathbb{R}^n$$
which, due to $S_1$ positive definite, is also convex \footnote{Positive definiteness of $S_1$ implies that the function $v\mapsto v\tr S_1v-\Delta\tr S_1\Delta$ is convex. Thus, convexity of $\mathcal{V}_1$ follows from the fact that  sublevel sets of convex functions are convex sets; see, \eg, \cite{boyd2004convex}.}. We want to show that, for  $z\in\mathbb{R}^n$
\begin{equation}
\Co\mathcal{L}(z)\subset \mathcal{V}_1
\label{eq:IncUno}
\end{equation}
To this end, pick $l\in\mathcal{L}(z)$, then, by definition, there exists a sequence $z_k\rightarrow z$ such that $l=\lim\Psi(z_k)$. On the other hand, from Lemma~\ref{lem:sec1}, it turns out that, for every $k\in\mathbb{N}$ and for every diagonal positive definite matrix $S_1$, one has $\Psi(z_k)\tr S_1\Psi(z_k)-\Delta\tr S_1\Delta\leq 0$, which, by taking the limit over $k$ yields $l\tr S_1l-\Delta\tr S_1\Delta\leq 0$, that is $l\in\mathcal{V}_1$, and then 
$$\mathcal{L}(z)\subset\mathcal{V}_1$$  
Thus, since $\mathcal{V}_1$ is convex,  taking the convex-hull of both sides of the latter relation  establishes \eqref{eq:IncUno}, which in turn gives \eqref{eq:Chap1:Sector1}.
To show \eqref{eq:Chap1:Sector2}, we pursue a similar approach. 
Specifically, for any $z\in\mathbb{R}^n$, define the closed set $$\mathcal{V}_2(z)=\{v\in\mathbb{R}^n\colon v\tr S_2(v+z)\leq 0\}\subset\mathbb{R}^n$$
which is convex due to $S_2$ positive definite. We want to show that $\Co\mathcal{L}(z)\subset\mathcal{V}_2(z)$. To this end, pick any $l\in\mathcal{L}(z)$, then there exists a sequence $z_k\rightarrow z$, such that $l=\lim\Psi(z_k)$. According to \cite{Ferrante2014ECC}, for every $k\in\mathbb{N}$, one has $\Psi(z_k)\tr S_2(\Psi(z_k)+z_k)\leq 0$, then by taking the limit over $k$, one gets $l\tr S_2(l+z)\leq 0$, that is $l\in\mathcal{V}_2(z)$, and then 
$$\mathcal{L}(z)\subset\mathcal{V}_2(z)$$  
Thus, by taking the convex hull of both sides, being $\mathcal{V}_2(z)$  convex, yields $\Co\mathcal{L}(z)\subset\mathcal{V}_2(z)$ and this finishes the proof.
\end{proof}

Next we give sufficient conditions for the solution, respectively, to Problem \ref{prob:Chap1:StabilityQuantizedState} and Problem \ref{prob:Chap1:StabilizationQuantizedState}.
\subsection{Stability analysis}
The next result, whose derivation is based on the technical results on differential inclusions reported in Section~\ref{sec:preliminaries}, gives a sufficient condition to solve Problem \ref{prob:Chap1:StabilityQuantizedState}. 
%%%%%%%%%%%%%RESULTS%%%%%%%%%%%%%%%%%%%%%%%%%%%%%%
\begin{proposition}
	\label{prop:Chap1:StabilityLMIState}
	If there exist $P\in \Spn$, $S_1,S_2\in\Dpn$, and a positive scalar $\tau$ such that
	\begin{align}
	\label{eq:Chap1:EqProp1QuantizedState}
	&\begin{bmatrix}
	\He(P(A+BK))+\tau P&PBK-S_2\\
	\bullet&-S_1-2S_2
	\end{bmatrix}<\0\\
	\label{eq:Chap1:EqProp2QuantizedState}
	&\Delta\tr S_1\Delta-\tau\leq 0
	\end{align}
	then,
	\begin{equation}
	\mathcal{A}=\mathcal{E}(P)
	\label{eq:Chap1:AQuantizedState}
	\end{equation}
	is a solution to Problem~\ref{prob:Chap1:StabilityQuantizedState}.
\end{proposition}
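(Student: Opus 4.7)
The natural approach is to use the quadratic Lyapunov candidate $V(x) = x\tr P x$ and invoke Proposition~\ref{lem:Chap1:Aux}. As a preliminary step, I would verify that the right-hand side of the closed-loop differential inclusion, $F(x) := (A+BK)x + BK\mathcal{K}[\Psi](x)$, satisfies the basic conditions on $\R^n$: outer semicontinuity, convexity, non-emptiness, and boundedness of $\mathcal{K}[\Psi]$ are standard consequences of $\Psi$ being locally bounded (this is actually already established implicitly through the proof of Lemma~\ref{lem:Chap1:SectorKrasovskii}), and these properties are preserved by the affine map $v\mapsto (A+BK)x+BKv$.

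The heart of the proof is an S-procedure-style argument combining the LMI with the sector bounds from Lemma~\ref{lem:Chap1:SectorKrasovskii}. For arbitrary $x\in\R^n$ and $f = (A+BK)x + BKv$ with $v\in\mathcal{K}[\Psi](x)$, I would pre- and post-multiply \eqref{eq:Chap1:EqProp1QuantizedState} by $\begin{bmatrix}x\tr & v\tr\end{bmatrix}$ and its transpose. After regrouping and noticing that $\langle\nabla V(x),f\rangle = x\tr\He(P(A+BK))x + 2x\tr PBK v$, this yields
\begin{equation*}
\langle\nabla V(x),f\rangle + \tau V(x) < v\tr S_1 v + 2 v\tr S_2(v+x).
\end{equation*}
Applying \eqref{eq:Chap1:Sector1} to bound $v\tr S_1 v \le \Delta\tr S_1 \Delta$, and \eqref{eq:Chap1:Sector2} to bound $v\tr S_2(v+x)\le 0$, and finally using \eqref{eq:Chap1:EqProp2QuantizedState} to replace $\Delta\tr S_1\Delta$ by $\tau$, delivers the key inequality $\langle\nabla V(x),f\rangle < -\tau\bigl(V(x)-1\bigr)$ for every $f\in F(x)$.

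Since the LMI in \eqref{eq:Chap1:EqProp1QuantizedState} is strict, by continuity one can extract a small $\eta>0$ so that the argument above actually gives $\langle\nabla V(x), f\rangle \le -(\tau+\eta')V(x) + \tau$ with $\eta' := \eta/\lambda_{\max}(P) > 0$. Setting $\alpha = 1$ in Proposition~\ref{lem:Chap1:Aux}, I then verify that for every $x$ with $V(x)\ge 1$ one has $\langle\nabla V(x),f\rangle \le -\rho V(x)$ for, e.g., $\rho = \eta'/2$. Radial unboundedness and positive definiteness of $V$ are immediate, so Proposition~\ref{lem:Chap1:Aux} applies and yields UGAS of $\mathcal{A} = \{x\in\R^n : x\tr P x \le 1\} = \mathcal{E}(P)$.

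The main obstacle I expect is the margin-extraction step. The unrefined sector bounds alone give only $\langle\nabla V(x),f\rangle \le -\tau(V(x)-1)$, which vanishes on the boundary $\partial\mathcal{E}(P)$ and is therefore insufficient to directly satisfy the hypothesis of Proposition~\ref{lem:Chap1:Aux} at $\alpha=1$. Exploiting the strict inequality in \eqref{eq:Chap1:EqProp1QuantizedState} to turn this into a genuinely negative bound on $\partial\mathcal{E}(P)$ (away from the origin) is the delicate bookkeeping step; alternatively, one can sidestep it by establishing UGAS of $\{V\le \alpha\}$ for every $\alpha>1$ and passing to the limit, though the extraction argument above is cleaner.
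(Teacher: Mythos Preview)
Your proposal is correct and follows essentially the same route as the paper: quadratic Lyapunov function $V(x)=x\tr Px$, S-procedure with the two sector bounds from Lemma~\ref{lem:Chap1:SectorKrasovskii}, extraction of a margin from the strictness of \eqref{eq:Chap1:EqProp1QuantizedState}, and invocation of Proposition~\ref{lem:Chap1:Aux}. The paper's only cosmetic difference is that it first bounds $\langle\nabla V(x),f\rangle\le -\gamma x\tr x$ and then converts via $V(x)\le\lambda_{\max}(P)x\tr x$, whereas you fold this directly into the coefficient $\eta'=\eta/\lambda_{\max}(P)$; the margin-extraction concern you flag is handled identically in both proofs.
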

\begin{proof}
	%%%%%%%%%%%%%%%%%%%%%%%%%%%%%%%%%Start-Proof%%%%%%%%%%%%%%%%%%%%%%%%%%%%%%%%%%%%%%%%%%%%%%%%%%%
	For every $x\in\R^n$, consider the following quadratic function $V(x)= x\tr Px$  and assume that 
	\eqref {eq:Chap1:EqProp1QuantizedState} and \eqref{eq:Chap1:EqProp2QuantizedState} hold. We want to prove that  there exists a positive real scalar $\beta$ such that 
	\begin{equation}
	\label{eq:Chap1:Vp0State}
	\langle \nabla V(x),(A+BK)x+BKv\rangle \leq -\beta V(x)\qquad \forall x\in\mathbb{R}^n\setminus \Int\mathcal{A},\,v\in\mathcal{K}[\Psi](x)
\end{equation} 
Then, UGAS of the set $\mathcal{A}$ in \eqref{eq:Chap1:AQuantizedState} for \eqref{eq:Chap1:Kraso} follows directly from Proposition~\ref{lem:Chap1:Aux}.
Now observe that for each $x\in\mathbb{R}^n\setminus \Int\mathcal{A},v\in\mathcal{K}[\Psi](x)$, and any positive scalar $\tau$
$$
\langle\nabla V(x),(A+BK)x+BKv\rangle \leq \langle\nabla V(x),(A+BK)x+BKv\rangle -\tau(1-x\tr Px)
$$
Hence, to show \eqref{eq:Chap1:Vp0State} it suffices to show that for every $x\in\mathbb{R}^n$
\begin{equation}
\label{eq:Chap1:VpState}
\langle \nabla V(x),(A+BK)x+BKv\rangle -\tau(1-x\tr Px)\leq -\beta V(x) \qquad \forall v\in\mathcal{K}[\Psi](x)
\end{equation}
By using similar arguments, thanks to Lemma~\ref{lem:Chap1:SectorKrasovskii}, it follows that \eqref{eq:Chap1:VpState} holds if
\begin{equation*}
\begin{split}
&\langle\nabla V(x),(A+BK)x+BKv\rangle-\tau(1-x\tr Px)-v\tr S_1v\\
&+\Delta\tr S_1\Delta-2v\tr S_2(v+x)\leq -\beta  V(x)\qquad \forall x\in\mathbb{R}^n, v\in\mathbb{R}^n
\end{split}
\end{equation*}
	By straightforward calculations, the left-hand side of the above relation can be rewritten as follows
	\begin{equation}
	{\begin{bmatrix}
		x\\
		v
		\end{bmatrix}}^T \begin{bmatrix}
	\He(P(A+BK))+\tau P&PBK-S_2\\
	\bullet&-S_1-2S_2
	\end{bmatrix}\begin{bmatrix}
	x\\
	v
	\end{bmatrix}+\Delta\tr S_1\Delta-\tau
	\end{equation} 
	Thus in view of \eqref{eq:Chap1:EqProp1QuantizedState} and \eqref{eq:Chap1:EqProp2QuantizedState}, it follows that there exists a small enough positive scalar $\gamma$ such that for every $x\in\mathbb{R}^n\setminus\Int\mathcal{A}, v\in\mathcal{K}[\Psi](x)$, one has $\langle \nabla V(x),(A+BK)x+BKv\rangle \leq -\gamma x\tr x$. Then, since for every $x\in\R^n$, $V(x)\leq\lambda_{\max}(P)x\tr x$, by setting $\beta=\frac{\gamma}{\lambda_{\max}(P)}$ gives \eqref{eq:Chap1:VpState} and this finishes the proof.
\end{proof}

\begin{remark}
In the proof of Proposition~\ref{prop:Chap1:StabilityLMIState}, UGAS of the set $\mathcal{A}=\mathcal{E}(P)$ is proven by showing finite time attractivity and strong forward invariance of such a set. This is possible due to the regularity of the right-hand side of \eqref{eq:Chap1:Kraso} that allows to rely on the results in 
\cite{goebel2012hybrid, teel2010smooth}. In other words, in this specific case Lyapunov stability of the set $\mathcal{A}$ directly comes from finite time convergence and strong forward invariance. However, this is not always the case.
\end{remark}

\begin{remark}
In deriving the above result, we selected the set $\mathcal{A}$ to be the $1$-sublevel set of the function $V$. This is not restrictive. Indeed, if one selects the set $\mathcal{A}$ to be any other nonempty sublevel set of the function $V$, then the resulting conditions, up to a rescaling and an invertible change of variables, turn out to be equal to \eqref{eq:Chap1:EqProp1QuantizedState} and \eqref{eq:Chap1:EqProp2QuantizedState}. This is a common custom adopted in the literature; see, \eg, \cite{tarbouriech2011}.
\end{remark}
Proposition~\ref{prop:Chap1:StabilityLMIState} provides a sufficient condition to solve Problem~\ref{prob:Chap1:StabilityQuantizedState}.  A necessary condition to ensure the feasibility of \eqref{eq:Chap1:EqProp1QuantizedState} is that the matrix $A+BK$ is Hurwitz. The result given next shows that such a condition is also sufficient for the simultaneous satisfaction of \eqref{eq:Chap1:EqProp1QuantizedState} and \eqref{eq:Chap1:EqProp2QuantizedState}.

\begin{proposition}
	\label{prop:Chap1:ExistenceStability}
	Let $K\in\mathbb{R}^{m\times n}$ such that $A+BK$ is Hurwitz. Then, there exists $(\tau,P,S_1,S_2)\in\R_{>0}\times\Spn\times \Dpn\times \Dpn$ satisfying \eqref{eq:Chap1:EqProp1QuantizedState} and \eqref{eq:Chap1:EqProp2QuantizedState}.
\end{proposition}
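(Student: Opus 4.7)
The plan is to exploit the Hurwitz hypothesis on $A+BK$ via a standard Lyapunov construction, and then reconcile the two inequalities with a homogeneity-and-scaling argument. First I would invoke classical Lyapunov theory: since $A+BK$ is Hurwitz, there exists $P_0\in\Spn$ (take, \eg, the unique solution to $P_0(A+BK)+(A+BK)\tr P_0=-\Id_n$) such that $N\coloneqq\He(P_0(A+BK))<0$. Continuity of eigenvalues in the coefficients then yields a $\tau_0>0$ small enough so that $N+\tau_0 P_0<0$, which secures the $(1,1)$ block of \eqref{eq:Chap1:EqProp1QuantizedState}.

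Next I would construct candidate diagonal multipliers $S_1^0,S_2^0\in\Dpn$ so that the full block matrix built from $(P_0,S_1^0,S_2^0,\tau_0)$ is negative definite. With $N+\tau_0 P_0<0$ already in hand, the Schur complement reduces \eqref{eq:Chap1:EqProp1QuantizedState} to the matrix inequality
\begin{equation*}
S_1^0+2S_2^0\,>\,(P_0 BK-S_2^0)\tr\bigl(-(N+\tau_0 P_0)\bigr)^{-1}(P_0 BK-S_2^0).
\end{equation*}
Fixing $S_2^0=\sigma\Id_n$ for some $\sigma>0$, the right-hand side is a fixed symmetric matrix, and it can be dominated by choosing $S_1^0=s\Id_n$ with $s>0$ taken large enough. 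Both $S_1^0$ and $S_2^0$ belong to $\Dpn$, so this yields a feasible tuple for \eqref{eq:Chap1:EqProp1QuantizedState} by itself.

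The remaining difficulty, and the step I expect to be the main obstacle, is that \eqref{eq:Chap1:EqProp2QuantizedState} is \emph{not} homogeneous in the decision variables: it involves the fixed quantization vector $\Delta$ and couples $S_1$ to $\tau$, so the lower bound on $S_1^0$ issued from the Schur step cannot simply be traded against $\tau_0$. The key observation to close the argument is that \eqref{eq:Chap1:EqProp1QuantizedState} \emph{is} scale-invariant under the rescaling $(P,S_1,S_2)\mapsto(\lambda P,\lambda S_1,\lambda S_2)$ for any $\lambda>0$, while the left-hand side of \eqref{eq:Chap1:EqProp2QuantizedState} scales linearly in $\lambda$. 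I would therefore substitute $P=\lambda P_0$, $S_1=\lambda S_1^0$, $S_2=\lambda S_2^0$ with $\tau=\tau_0$: \eqref{eq:Chap1:EqProp1QuantizedState} then factors as $\lambda$ times the negative definite matrix obtained in the previous step and is preserved, whereas \eqref{eq:Chap1:EqProp2QuantizedState} reduces to the scalar condition $\lambda\,\Delta\tr S_1^0\Delta\leq\tau_0$, which is met for every $\lambda\in\bigl(0,\tau_0/(\Delta\tr S_1^0\Delta)\bigr]$. The resulting $4$-tuple $(\tau_0,\lambda P_0,\lambda S_1^0,\lambda S_2^0)\in\R_{>0}\times\Spn\times\Dpn\times\Dpn$ then satisfies both \eqref{eq:Chap1:EqProp1QuantizedState} and \eqref{eq:Chap1:EqProp2QuantizedState} simultaneously, concluding the proof.
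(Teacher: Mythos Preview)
Your argument is correct. The homogeneity observation---that \eqref{eq:Chap1:EqProp1QuantizedState} is invariant under $(P,S_1,S_2)\mapsto(\lambda P,\lambda S_1,\lambda S_2)$ while the left-hand side of \eqref{eq:Chap1:EqProp2QuantizedState} scales linearly in $\lambda$---cleanly decouples the two constraints, and the Schur-complement step is handled correctly.

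The paper proceeds differently. Rather than constructing a feasible point for \eqref{eq:Chap1:EqProp1QuantizedState} first and then rescaling, it fixes $\overline{\tau}$ explicitly in the interval $(0,2\min\mathcal{R}(A+BK))$, so that $A+BK+\tfrac{\overline{\tau}}{2}\Id$ remains Hurwitz; then it picks $\overline{S}_1\in\Dpn$ small enough that \eqref{eq:Chap1:EqProp2QuantizedState} holds from the outset, and finally solves a tailored Lyapunov equation $\He\bigl((A+BK+\tfrac{\overline{\tau}}{2}\Id)\overline{W}\bigr)=-BK\overline{S}_1^{-1}K\tr B\tr-\overline{Q}_1$ whose Schur complement collapses exactly to $-\overline{Q}_1<0$. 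The matrix $S_2$ is obtained afterwards by a continuity perturbation around $S_2=0$. Your route is more elementary and avoids the bespoke Lyapunov construction; the paper's route, in exchange, produces an \emph{explicit} feasibility range for $\tau$, namely $(0,2\min\mathcal{R}(A+BK))$, which is subsequently exploited to initialize the grid $\mathcal{G}_\tau$ in Algorithm~\ref{Algo:Chap1:StateDesign}. Your existential ``$\tau_0$ small enough'' would not directly deliver that quantitative bound.
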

\begin{proof}
	Assume there exists $(\overline{\tau},\overline{P},\overline{S}_1)\in\R_{>0}\times\Spn\times \Dpn$ such that
	\begin{align}
	\label{eq:Chap1:Existence0C1}
	&\begin{bmatrix}
	\He(\overline{P}(A+BK))+\overline{\tau} \overline{P}K&\overline{P}B\\
	\bullet&-S_1
	\end{bmatrix}<\0\\
	&\Delta\tr \overline{S}_1\Delta-\overline{\tau}\leq 0
	\label{eq:Chap1:Existence0C2}
	\end{align}
	For every diagonal $S_2\in\R^{p\times p}$, define 
	$$
	\mathcal{M}(S_2)\coloneqq \begin{bmatrix}
	\He(\overline{P}(A+BK))+\overline{\tau}\overline{P}&\overline{P}BK-S_2\\
	\bullet&-\overline{S}_1-2S_2
	\end{bmatrix}.
	$$
	From \eqref{eq:Chap1:Existence0C1} it follows that $\mathcal{M}(\0)<\0$. Moreover, since $\mathcal{M}(S_2)$ depends continuously on the entries of $S_2$, there exists a small enough positive scalar $\delta$, such that for every $S_2\in\delta\Dpm$ with $S_2\leq \delta\Id$  yields\footnote{This fact can be justified by noticing that the set $H\coloneqq\{v\in\R^m\colon\mathcal{M}(\Diag\{v_1,v_2,\dots, v_m\})<\0\}$ is open. Then, since $0\in H$, there exists a positive scalar $\varepsilon$ such that $\varepsilon\mathbb{B}\subset H$. Thus, by picking $\delta=\frac{1}{\sqrt{m}}\varepsilon$ yields the result.} $\mathcal{M}(S_2)<\0$, hence proving \eqref{eq:Chap1:EqProp2QuantizedState}. 
	
	To conclude the proof, it suffices to show that whenever $A+BK$ is Hurwitz there exists $(\overline{\tau},\overline{P},\overline{S_1})\in\R_{>0}\times\Spn\times \Dpn$  such that \eqref{eq:Chap1:Existence0C1} and \eqref{eq:Chap1:Existence0C2} hold.
	To this end, define $A_{cl}=A+BK$ and let $\mathcal{R}(A_{cl})\coloneqq\{\vert\Re(\lambda)\vert\colon\lambda\in\spec(A_{cl})\}$; notice that since $A_{cl}$ is Hurwitz, then $\mathcal{R}(A_{cl})\subset\R_{>0}$. 
	Pick $\bar{\tau}\in(0,2\min \mathcal{R}(A_{cl}))$, and define, $\widetilde{A}_{cl}=A_{cl}+\frac{\bar{\tau}}{2}\Id$. Observe that, according to the selection considered for $\bar{\tau}$, $\widetilde{A}_{cl}$ is Hurwitz.
Select $\overline{S}_1\in\Dpn$, such that $\Delta\tr\overline{S}_1\Delta-\overline{\tau}\leq 0$.  By following these choices, the right-hand side of \eqref{eq:Chap1:Existence0C1} reads
	\begin{equation}
	\label{eq:Chap1:Existence1}
	\begin{bmatrix}
	\He(\widetilde{A}_{cl}\tr P)&PBK\\
	\bullet&-\overline{S}_1
	\end{bmatrix}.
	\end{equation}
	For any $	\overline{Q}_1\in\Spn$, pick the solution $\overline{W}\in\Spn$ to the following matrix equality
	$
	\He(\widetilde{A}_{cl}\overline{W})=-BK\overline{S}_1^{-1}K\tr B\tr -\overline{Q}_1.
	$
	Note that such a solution always exists since $\widetilde{A}_{cl}$ is Hurwitz, and $\overline{S}_1\in\Dpn$.  Now, set in \eqref{eq:Chap1:Existence1}, $P=\overline{W}^{-1}$, then \eqref{eq:Chap1:Existence1} becomes
	\begin{equation}
	\label{eq:Chap1:Existence2}
	\begin{bmatrix}
	\He(\widetilde{A}_{cl}\tr \overline{W}^{-1})&\overline{W}^{-1}BK\\
	\bullet&-\overline{S}_1
	\end{bmatrix}.
	\end{equation}
	We want to show that the latter matrix is negative definite.  By pre-and-post-multiplying \eqref{eq:Chap1:Existence2} by $\Diag(\overline{W}, \Id)$, it turns out that \eqref{eq:Chap1:Existence2} is negative definite if and only if
	\begin{equation}
	\label{eq:Chap1:Existence3}
	\begin{bmatrix}
	\He(\widetilde{A}_{cl}\overline{W})&BK\\
	\bullet&-\overline{S}_1
	\end{bmatrix}<\0
	\end{equation}
	and the latter, due to the selection done for $\overline{W}$ turns into
	\begin{equation}
	\label{eq:Chap1:Existence4}
	\begin{bmatrix}
	-BK\overline{S}_1^{-1}K\tr B\tr -\overline{Q}_1&BK\\
	\bullet&-\overline{S}_1
	\end{bmatrix}<\0
	\end{equation}
	Moreover, by Schur complement, as $\overline{S}_1$ is positive definite, \eqref{eq:Chap1:Existence4} is negative definite if and only if
	$$
	-BK\overline{S}_1^{-1}K\tr B\tr -\overline{Q}_1+BK\overline{S}_1^{-1}K\tr B\tr=-\overline{Q}_1<\0
	$$
which is obviously satisfied due to $\overline{Q}_1\in\Spn$. Then, $(\overline{\tau},\overline{W}^{-1},\overline{S}_1)$ establishes the result.
\end{proof}
\subsection{Stabilization}
Clearly Proposition~\ref{prop:Chap1:StabilityLMIState} provides a first condition to solve Problem~\ref{prob:Chap1:StabilityQuantizedState}. However, due to products between unknown variables,  a direct exploitation of the conditions given by Proposition~\ref{prop:Chap1:StabilityLMIState} to solve the controller design problem appears unlikely from a numerical standpoint, then further work is needed. On the other hand, if one attempts to alleviate such nonlinear terms by means of standard techniques (congruence transformations, and invertible changes of variables),  the resulting condition reveals to be still nonlinear and presenting more involved nonlinearities as trilinear terms. To somehow overcome this shortcoming, as a first step, resting on the use of the projection lemma \cite{pipeleers2009extended}), we derive an equivalent condition  to \eqref{eq:Chap1:EqProp1QuantizedState}, which is linear in the variable $P$ defining the set $\mathcal{A}$, and bilinear with respect to the controller gain and some additional variables. Such an equivalent condition is given as follows.
\begin{proposition}
	\label{prop:Chap1:ProjectionFeasibleState}
	Let $P\in\Spn,S_1,S_2\in\Dpn, K\in\R^{m\times n},\tau\in\R_{>0}$. The satisfaction of \eqref{eq:Chap1:EqProp1QuantizedState}
	is equivalent to the feasibility of 
	\begin{equation}
	\label{eq:Chap1:ProjectionState}
	\begin{bmatrix}
	-\He(X_1)&P-X_2+X_1\tr (A+BK)&X_1\tr BK\\
	\bullet&\He(X_2\tr (A+BK))+\tau P&X_2\tr BK-S_2\\
	\bullet&\bullet&-S_1-2S_2
	\end{bmatrix}<\0
	\end{equation}
	with respect to $X_1,X_2 \in\R^{n\times n}$.
\end{proposition}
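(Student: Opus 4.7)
The plan is to apply the projection lemma of \cite{pipeleers2009extended} after rearranging \eqref{eq:Chap1:ProjectionState} so that the slack variables $X_1$ and $X_2$ appear only inside a single Hermitian term. Setting $A_{cl}\coloneqq A+BK$, I would first verify by direct block comparison that the left-hand side of \eqref{eq:Chap1:ProjectionState} equals
\begin{equation*}
\Psi + \He(\mathcal{E}\,\mathcal{X}\,U), \qquad \Psi \coloneqq \begin{bmatrix}\0 & P & \0\\ P & \tau P & -S_2\\ \0 & -S_2 & -S_1-2S_2\end{bmatrix},
\end{equation*}
where $U\coloneqq\begin{bmatrix}-\Id_n & A_{cl} & BK\end{bmatrix}$, $\mathcal{E}\coloneqq\begin{bmatrix}\Id_n & \0\\ \0 & \Id_n\\ \0 & \0\end{bmatrix}$, and $\mathcal{X}\coloneqq\begin{bmatrix}X_1\tr\\ X_2\tr\end{bmatrix}$ plays the role of the free matrix variable. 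The verification is a routine block-wise check: the first column of $\mathcal{E}$ reproduces every entry of \eqref{eq:Chap1:ProjectionState} containing $X_1$, the second column reproduces every entry containing $X_2$, and what remains is $\Psi$.

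Next, I would invoke the projection lemma: the existence of $\mathcal{X}$ satisfying $\Psi+\He(\mathcal{E}\mathcal{X}U)<\0$ is equivalent to the simultaneous negative definiteness of $N_1\tr \Psi N_1$ and $N_2\tr \Psi N_2$, where the columns of $N_1$ and $N_2$ are bases of $\ker U$ and $\ker \mathcal{E}\tr$, respectively. I would then compute both projected inequalities explicitly. Choosing
\begin{equation*}
N_1 = \begin{bmatrix} A_{cl} & BK\\ \Id_n & \0\\ \0 & \Id_n \end{bmatrix}, \qquad N_2 = \begin{bmatrix}\0\\ \0\\ \Id_n\end{bmatrix},
\end{equation*}
a direct block multiplication gives
\begin{equation*}
N_1\tr \Psi N_1 = \begin{bmatrix}\He(PA_{cl})+\tau P & PBK-S_2\\ \bullet & -S_1-2S_2\end{bmatrix}, \qquad N_2\tr \Psi N_2 = -S_1-2S_2.
\end{equation*}
The first matrix is exactly the left-hand side of \eqref{eq:Chap1:EqProp1QuantizedState}, while the second is automatically negative definite since $S_1,S_2\in\Dpn$. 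Combining these two facts with the projection lemma yields the claimed equivalence.

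The main obstacle is the bookkeeping of the first step: correctly identifying $\Psi$, $\mathcal{E}$, and $U$ so that \eqref{eq:Chap1:ProjectionState} matches the template $\Psi+\He(\mathcal{E}\mathcal{X}U)<\0$ required by the projection lemma. Once this factorization is in place, the two projected inequalities reduce to purely mechanical multiplications of structured block matrices, and the implication $\eqref{eq:Chap1:EqProp1QuantizedState}\Leftrightarrow\eqref{eq:Chap1:ProjectionState}$ follows immediately because the second projected condition holds a priori from the assumptions on $S_1$ and $S_2$.
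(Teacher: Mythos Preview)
Your proposal is correct and is essentially the same argument as the paper's proof: both apply the projection lemma with the identical choices $\Psi=Q$, $U=W_r^\perp=[-\Id,\,A_{cl},\,BK]$, $\mathcal{E}\tr=U_r^\perp=[\Id_{2n},\,\0]$, and kernel bases $N_1=W$, $N_2\tr=[\0,\,\0,\,\Id]$. The only cosmetic difference is that you start from \eqref{eq:Chap1:ProjectionState} and project down to \eqref{eq:Chap1:EqProp1QuantizedState}, whereas the paper starts from the two projected inequalities $W\tr QW<\0$ and $U\tr QU<\0$ and invokes the lemma in the opposite direction.
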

\begin{proof}
From Proposition~\ref{prop:Chap1:StabilityLMIState}, note that \eqref{eq:Chap1:EqProp1QuantizedState} can be rewritten as\footnote{\textcolor{blue}{In the published version, $U\tr \mathcal{Q}U<\0$ should be replaced by $W\tr Q W<\0$. This error does not propagate and the reminder of the proof is correct. Moreover, the published version contains a typo in ${W_r^\perp}\tr$ and ${U_r^\perp}\tr$.}}  \textcolor{blue}{$W\tr Q W<\0$}, where:
	$$W=\begin{bmatrix}
	A+BK&BK\\
	\Id&\0\\
	\0&\Id
	\end{bmatrix}, Q=\begin{bmatrix}
	\0&P&\0\\
	\bullet&\tau P&-S_2\\
	\bullet&\bullet&-S_1-2S_2
	\end{bmatrix}$$
Moreover, since $S_1$ and $S_2$ are positive definite, $U\tr Q U<\0$, with $U\tr =\begin{bmatrix}\0&\0&\Id\end{bmatrix}$, is obviously satisfied. Thus, by the projection lemma; see \cite{pipeleers2009extended}, the satisfaction of \eqref{eq:Chap1:EqProp1QuantizedState}, whenever $S_1$ and $S_2$ are required to be positive definite, is equivalent to find a matrix $X$ such that
	\begin{equation}
	\label{eq:Chap1:Projection2State}
	Q+\textcolor{blue}{{W_r^\perp}\tr} X U_r^\perp+\textcolor{blue}{{U^\perp_r}\tr} X\tr W_r^\perp<\0
	\end{equation}
where ${U}_r^\perp$ and ${W}_r^\perp$ are some matrices such that ${U}_r^\perp U=0$ and ${W}_r^\perp W=0$. Now, by selecting ${U}_r^\perp=\begin{bmatrix}\Id_{2n}&\0_{2n\times p}\end{bmatrix}$ and ${W}_r^\perp=\begin{bmatrix}-\Id& A+BK& BK\end{bmatrix}$, and by partitioning $X=\begin{bmatrix}
X_1&X_2\end{bmatrix}$, where $X_1,X_2\in\mathbb{R}^{n\times n}$, from \eqref{eq:Chap1:Projection2State} one gets
	\begin{equation}
	\begin{bmatrix}
	-\He(X_1)&P-X_2+X_1\tr (A+BK)&X_1\tr BK\\
	\bullet&\He(X_2\tr (A+BK))+\tau P&X_2\tr BK-S_2\\
	\bullet&\bullet&-S_1-2S_2
	\end{bmatrix}<\0
	\end{equation}
	and this finishes the proof.
\end{proof}
\section{Numerical Issues}
\label{sec:Numerical}
The implicit objective in solving both Problem~\ref{prob:Chap1:StabilityQuantizedState} and Problem~\ref{prob:Chap1:StabilizationQuantizedState} consists of reducing the size of the set $\mathcal{A}$, with respect to a certain criterion. To this end, one can embed the conditions provided, respectively, by Proposition~\ref{prop:Chap1:StabilityLMIState} and Proposition~\ref{prop:Chap1:ProjectionFeasibleState} into suitable optimization schemes.  To this end, an adequate measure of the set $\mathcal{A}=\mathcal{E}(P)$  needs to be selected. Specifically, one needs to define a function $\mathcal{M}\colon\mathbb{R}^{n\times n}\rightarrow \mathbb{R}$ such that $\mathcal{M}(P)$ provides a convenient indication on the size of the set $\mathcal{A}$. Once $\mathcal{M}$ is defined, Proposition~\ref{prop:Chap1:StabilityLMIState} and Proposition~\ref{prop:Chap1:ProjectionFeasibleState} enable to associate the following optimization problems, respectively, to  Problem~\ref{prob:Chap1:StabilityQuantizedState}
\begin{equation}
\label{eq:Chap1:StabInst}
\begin{aligned}
& \underset{P,S_1,S_2,\tau}{\text{minimize}}
& & \mathcal{M}(P) \\
& \text{subject to}
& &S_1,S_2\in\Dpn, P\in\Spn,\tau>0\\
&&& \eqref{eq:Chap1:EqProp1QuantizedState},\eqref{eq:Chap1:EqProp2QuantizedState}.
\end{aligned}
\end{equation}
and Problem~\ref{prob:Chap1:StabilizationQuantizedState}
\begin{equation}
\label{eq:Chap1:DesignInst}
\begin{aligned}
& \underset{P,S_1,S_2,\tau,X_1,X_2,K}{\text{minimize}}
& & \mathcal{M}(P) \\
& \text{subject to}
& &S_1,S_2\in\Dpn,P\in\Spn,\tau>0\\
&&& \eqref{eq:Chap1:ProjectionState}, \eqref{eq:Chap1:EqProp2QuantizedState}.
\end{aligned}
\end{equation}

The above optimization problems are characterized by bilinear matrix inequalities constraints. Therefore, the solution to such problems is in general challenging from a numerical standpoint; see \cite{toker1995np,vanantwerp2000tutorial}.
To overcome this problem, inspired by \cite{arzelier2002iterative, ebihara2015s, vanantwerp2000tutorial}, we propose some linear matrix inequalities relaxations for problems \eqref{eq:Chap1:StabInst} and \eqref{eq:Chap1:DesignInst}.  Such relaxations enable to get suboptimal solutions to the considered optimization problems via numerically efficient algorithms. To this end, we select $\mathcal{M}$ as a convex function. Notice that $\mathcal{A}$ being an ellipsoidal set, picking $\mathcal{M}$ to be convex is not restrictive and several possibilities can be considered. Typical selections for $\mathcal{M}$ are $\log\det(P)$ or $\Tr(P)$; see \cite{Boyd} for more details.
Concerning \eqref{eq:Chap1:StabInst}, notice that when the scalar $\tau$ is fixed, such a problem is a genuine convex optimization problem over LMI constraints.  Then,  the solution  to this problem can be performed in polynomial time via, for example, interior points methods; see \cite{Boyd}. 
On the other hand, the positive scalar $\tau$ can be treated as a tuning parameter, or being selected via an iterative search. This is a typical approach pursued in the literature; see, \eg, \cite{vanantwerp2000tutorial,tarbouriech2014stability,tar:pri/cdc13}. Then \eqref{eq:Chap1:StabInst} can be efficiently solved on a computer, with only caveat to obtain a suboptimal solution. 
In particular, the proof of Proposition \ref{prop:Chap1:ExistenceStability} points out that to ensure the feasibility of \eqref{eq:Chap1:EqProp1QuantizedState} and \eqref{eq:Chap1:EqProp2QuantizedState}, one can select $\tau$ in $(0, 2\times 0.99\min \mathcal{R}(A+BK)]$, where $\mathcal{R}(A+BK)\coloneqq\{\vert\Re(\lambda)\vert\colon\lambda\in\spec(A+BK)\}$. This provides a systematic way to build an iterative algorithm for the solution to  \eqref{eq:Chap1:StabInst}.

As mentioned before, the solution to \eqref{eq:Chap1:DesignInst}, due to nonlinearities affecting condition \eqref{eq:Chap1:EqProp1QuantizedState}, is much more involved and requires a suitable strategy. In particular, inspired by \cite{arzelier2002iterative}, we propose the following iterative algorithm to derive suboptimal solutions to \eqref{eq:Chap1:DesignInst}.
%%%%%%%%%%%%%%%%%%Design Algorithm
\begin{algorithm}[H]
\caption{}
\begin{algorithmic}
{\small
\STATE{\bf Input:} Matrices $A,B$, scalar $\Delta>0$, $\overline{K}\in\R^{m\times n}$ such that $A+B\overline{K}$ is Hurwitz, a convex objective function $\mathcal{M}$, and a desired tolerance $\rho>0$.
\STATE{\bf Initialization:} 
			Select $\overline{\tau}=2\times 0.99\min ,\mathcal{R}(A+B\overline{K})$ and build a grid of positive values $\mathcal{G}_{\tau}$ such that $\max \mathcal{G}_{\tau}=\overline{\tau}$ (this ensures the feasibility of the resulting optimization problems).
			\STATE{\bf Iteration}
			
			\smallskip
			\noindent{\em Step 1:} 
			Given $\overline{K}$ from the previous step, solve the following convex optimization problem over LMIs by selecting $\tau$ over $\mathcal{G}_{\tau}$.
			\begin{equation}
			\label{eq:Chap1:AlgoOpt0}
			\begin{aligned}
			& \underset{S_1, S_2, P, X_1, X_2}{\minimize}
			& & \mathcal{M}(P) \\
			& \text{s.t.}
			&&S_1,S_2\in\Dpn,P\in\Spn\\
			&&&\left[\begin{smallmatrix}
			-\He(X_1)&P-X_2+X_1\tr (A+B\overline{K})&X_1\tr B\overline{K}\\
			\bullet&\He(X_2\tr (A+B\overline{K}))+\tau P&X_2\tr B\overline{K}-S_2\\
			\bullet&\bullet&-S_1-2S_2
			\end{smallmatrix}\right]<\0\\
			&&&\Tr(S_1)\Delta^2-\tau\leq 0
			\end{aligned}
			\end{equation}
			Pick the suboptimal solution obtained and set  $\overline{X}_1=X_1, \overline{X}_2=X_2$ for the next step.
			
			\smallskip
			\noindent{\em Step 2:} 
			Given $\overline{X}_1,\overline{X}_2$ from the previous step, solve the following convex optimization problem over LMIs by selecting $\tau$ over  $\mathcal{G}_{\tau}$.
			\begin{equation}
			\label{eq:Chap1:AlgoOpt1}
			\begin{aligned}
			& \underset{S_1, S_2, P, K}{\minimize}
			& & \mathcal{M}(P) \\
			& \text{s.t.}
			&&S_1,S_2\in\Dpn,P\in\Spn\\
			&&&\left[\begin{smallmatrix}
			-\He(\overline{X}_1)&P-\overline{X}_2+\overline{X}\tr _1(A+BK)&\overline{X}\tr _1BK\\
			\bullet&\He(\overline{X}\tr _2(A+BK))+\tau P&\overline{X}\tr _2BK-S_2\\
			\bullet&\bullet&-S_1-2S_2
			\end{smallmatrix}\right]<\0\\
			&&&\Tr(S_1)\Delta^2-\tau\leq 0.
			\end{aligned}
			\end{equation}
			Set $\overline{K}=K$,  for the next step.
			\STATE Determine the closed-loop matrix $A+B\overline{K}$ and set $\overline{\tau}=2\times 0.99\min \mathcal{R}(A+B\overline{K})$. Build a grid of positive values $\mathcal{G}_{\tau}$ such that $\bar{\tau}=\max\mathcal{G}_{\tau}$, and $\tau^{\star}\in\mathcal{G}_{\tau}$ (notice that necessarily $\tau^{\star}\leq \bar{\tau}$. Including $\tau^{\star}$ in $\mathcal{G}_{\tau}$ ensures the feasibility at the next step).
			
			\STATE {\bf Until} $\mathcal{M}$ does not decrease below $\rho$ over three consecutive steps.
			\STATE {\bf Output:} $\overline{K}, \overline{P}$.}
	\end{algorithmic}
	\label{Algo:Chap1:StateDesign}
	\end{algorithm}
\begin{remark}
Proposition \ref{prop:Chap1:ProjectionFeasibleState} plays a determinant role in the development of the above exposed algorithm. In fact, the introduction of the slack variables $X_1,X_2$ enables to treat $P$ as a decision variable at each step of the algorithm, without adding any additional conservatism; recall that the feasibility of \eqref{eq:Chap1:Projection2State} is equivalent to the one of \eqref{eq:Chap1:EqProp1QuantizedState}. Notice that, by exploiting directly Proposition \ref{prop:Chap1:StabilityLMIState}, due to the bilinear terms involving the matrix $P$ and the controller gain $K$, to retrace the strategy proposed in Algorithm~\ref{Algo:Chap1:StateDesign},  one would need to alternatively fix either $P$ or $K$, preventing from treating $P$ as a decision variable at each step.  This obviously has a dramatic impact on the achievable suboptimal solutions to \eqref{eq:Chap1:DesignInst}.
\end{remark}
The above algorithm presents some interesting properties that render its utilization in practice quite convenient. Such properties are stated in the following result.
\begin{proposition}
	Algorithm~\ref{Algo:Chap1:StateDesign} possess the following properties:
	\begin{itemize}
		\item[(a)]  For any given input such that $(A,B)$ is stabilizable, Algorithm~\ref{Algo:Chap1:StateDesign} always returns a suboptimal solution to Problem~\ref{prob:Chap1:StabilizationQuantizedState}
		\item [(b)] For any given input such that $(A,B)$ is stabilizable, Algorithm~\ref{Algo:Chap1:StateDesign} always terminates in a finite number of steps. 
	\end{itemize}
\end{proposition}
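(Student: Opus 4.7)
My plan is to tackle (a) by showing inductively that every step of the algorithm is feasible, so that at termination the returned pair $(\overline{K},\overline{P})$ satisfies the hypotheses of Proposition~\ref{prop:Chap1:StabilityLMIState}. Since $(A,B)$ is stabilizable, an input $\overline{K}$ with $A+B\overline{K}$ Hurwitz exists, and by the constructive part of the proof of Proposition~\ref{prop:Chap1:ExistenceStability} the choice $\overline{\tau}=2\times0.99\min\mathcal{R}(A+B\overline{K})$ renders \eqref{eq:Chap1:EqProp1QuantizedState}--\eqref{eq:Chap1:EqProp2QuantizedState} feasible for some $(P,S_1,S_2)$; Proposition~\ref{prop:Chap1:ProjectionFeasibleState} then lifts this to a feasible point of \eqref{eq:Chap1:ProjectionState}, establishing feasibility of the first Step~1. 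Step~2 is feasible by inheritance: the tuple $(P,S_1,S_2,X_1,X_2)$ returned by Step~1, paired with $K=\overline{K}$, satisfies the LMI in \eqref{eq:Chap1:AlgoOpt1}. After Step~2 produces a new $\overline{K}$, Proposition~\ref{prop:Chap1:ProjectionFeasibleState} (applied with the fixed $\overline{X}_1,\overline{X}_2$) yields $(P,S_1,S_2,\tau)$ satisfying \eqref{eq:Chap1:EqProp1QuantizedState}; the $(1,1)$-block of the latter forces $A+B\overline{K}$ to be Hurwitz with real-part margin at least $\tau/2$, so the updated $\overline{\tau}$ is well defined, and the prescription $\tau^{\star}\in\mathcal{G}_\tau$ in the rebuilt grid keeps the previous certificate feasible for the next Step~1. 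Proposition~\ref{prop:Chap1:StabilityLMIState} then yields that $\mathcal{E}(\overline{P})$ is UGAS for the closed-loop inclusion under $\overline{K}$, which proves~(a).

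For (b), I would prove that the sequence of objective values along the iterations is monotonically non-increasing. The output of Step~1 is a feasible point for the subsequent Step~2 (take $K=\overline{K}$), so the Step~2 minimum is no larger than that of Step~1. Symmetrically, the output of Step~2, combined with the $\overline{X}_1,\overline{X}_2$ that were fixed therein and with $\tau=\tau^{\star}$ retained in the next grid, is feasible for the following Step~1. Hence $\mathcal{M}(P)$ is non-increasing across steps. Since $\mathcal{M}$ is chosen to be convex and bounded from below on the constraint set (which is immediate for the standard selections $\mathcal{M}(P)=\Tr(P)$ or $\mathcal{M}(P)=-\logdet(P)$, in view of the implicit bounds $S_1\in\Dpn$, $\tau>0$, $\Tr(S_1)\Delta^2\leq\tau$, and the block structure of \eqref{eq:Chap1:ProjectionState}), the sequence converges in $\R$. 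The stopping condition---insufficient decrease over three consecutive iterations---must thus be triggered after finitely many steps.

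The main technical obstacle is the bookkeeping at the interface between consecutive iterations: whenever Step~2 delivers a fresh controller gain $\overline{K}$ and the grid $\mathcal{G}_\tau$ is rebuilt, one must certify that the certificate produced in the previous round remains feasible for the upcoming Step~1. Algorithm~\ref{Algo:Chap1:StateDesign} handles this through the explicit requirement $\tau^{\star}\in\mathcal{G}_\tau$, but an honest proof still demands a meticulous tracking of the carry-over of $(P,S_1,S_2,X_1,X_2)$ between Steps~1 and~2, together with the verification that the projection-lemma correspondence of Proposition~\ref{prop:Chap1:ProjectionFeasibleState} is preserved when the slack variables $X_1,X_2$ alternate between being optimized and being fixed.
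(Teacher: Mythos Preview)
Your proposal is correct and follows essentially the same approach as the paper: feasibility at initialization via Proposition~\ref{prop:Chap1:ExistenceStability} and Proposition~\ref{prop:Chap1:ProjectionFeasibleState}, propagation of feasibility from each step to the next by reusing the previous solution, and termination via monotone non-increase of $\mathcal{M}(P)$ together with a lower bound. If anything, you are more careful than the paper in two places: you explicitly argue that the gain produced by Step~2 keeps $A+B\overline{K}$ Hurwitz (the paper merely invokes ``the same arguments''), and you attempt to justify the lower bound on $\mathcal{M}$, whereas the paper simply assumes $\mathcal{M}_s(P)\geq 0$ on the feasible set or defers to a modified stopping rule.
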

\begin{proof}
	To establish (a), one needs to show that Algorithm~\ref{Algo:Chap1:StateDesign} never runs into a deadlock. To this end, notice that at each iteration, both \eqref{eq:Chap1:AlgoOpt0} and \eqref{eq:Chap1:AlgoOpt1} are always feasible. Indeed, during the first iteration, since $A_{cl}$ is Hurwitz, the feasibility of \eqref{eq:Chap1:AlgoOpt0} is ensured by Proposition \ref{prop:Chap1:ExistenceStability}. To see that also at each other iteration the considered optimization problem are always feasible, consider the following arguments.
	For the $j-th$ iteration, denote the value of the matrix $P$, respectively, at the exit of step 1 and of step 2 as $\overline{P}_{j}^{(1)}$ and $\overline{P}_{j}^{(2)}$.
	
	[From step 1 to step 2]  Obviously step 2 is always feasible, indeed keeping the same gain $\overline{K}$  from the previous step yields a feasible solution, and moreover $\mathcal{M}_s(\overline{P}_{j}^{(2)})\leq \mathcal{M}_s(\overline{P}_{j}^{(1)})$. 
	
	[From step 2 to step 1]  The  feasibility of \eqref{eq:Chap1:AlgoOpt0} is ensured by following the same arguments performed in the previous case. Then,  Algorithm~\ref{Algo:Chap1:StateDesign} never terminates due to infeasibility of ,  \ie,  (a) is proven. 
	
	To show (b), assume that $\mathcal{M}_s(P)\geq 0$ over the feasible set of \eqref{eq:Chap1:AlgoOpt1} (this assumption can be fulfilled without any difficulty by relying on standard convex criteria or by slightly modifying the stopping criterion to meet this assumption). Then, as shown in the proof of (a), since $\mathcal{M}_s(\overline{P})_{j+1}^{(1)})\leq  \mathcal{M}_s(\overline{P}_{j}^{(2)})$, it follows that the sequence $\{\mathcal{M}_s(\overline{P}_{j}^{(2)})\}_{j=1}^\infty$ converges when $j$ approaches infinity. 
	Thus, for any positive $\rho$, there exists $\bar{j}\in\mathbb{N}$ such that for each $j>\bar{j}$ one has $\vert\mathcal{M}_s(\overline{P}_{j}^{(2)})-\mathcal{M}_s(\overline{P}_{j+1}^{(2)})\vert<\rho$. Hence, (b) is proven.
\end{proof}
\section{Numerical Examples} 
\label{sec:Examples}
\begin{example}
\label{ex:chap1:StateBullo}
Consider again the case analyzed in Example~\ref{ex:Example0}. For this example, we shown that the set $\mathcal{S}$ is composed by Krasovskii equilibria for the closed-loop system. This implies that the smallest convex uniformly globally attractive compact set containing the origin one may expect to determine for the closed-loop system is 
$\Co\mathcal{S}$. This provides in this example a way to (exactly) measure the conservatism of our approach.
	In particular, by selecting as a convex criterion $\mathcal{M}_s(P)=-\log\det(P)$ and by performing a grid search for $\tau$ on $(0,2\times 0.99\min\mathcal{R}(A+BK)]$ one gets
	$$
	P=\begin{bmatrix}
	2912.617612709  &   -2912.612579926\\
	-2912.612579926  &    2912.609874515
	\end{bmatrix}
	$$
	\figurename~\ref{Fig:Chap1:3BulloLib} reports the set $\mathcal{E}(P)$, along with some closed-loop solutions.
	Notice that 
	$$\spec(P)\approx\{0.001163683, 5825.22\}$$
	ensuring that $P>\0$. The large difference between such eigenvalues is due to the shape of the set $\mathcal{E}(P)$  represented in \figurename~\ref{Fig:Chap1:3BulloLib}, which nearly corresponds to a segment. In particular, the largest semi axis of the set $\mathcal{A}$ is approximately given by the vector $29.314\times (0.707106, 0.707107)$, which nearly corresponds to the segment joining the points $(-20.72,-20.72)$ and $(20.72,20.72)$. Namely, $\mathcal{A}$ coincides approximately with $\Co\mathcal{S}$, showing the effectiveness of the proposed methodology, at least in this example. 
	\begin{figure}
		\centering
		\psfrag{x1}[c][][1]{$x_1$}
		\psfrag{x2}[c][1]{$x_2$}
		\includegraphics[scale=0.8]{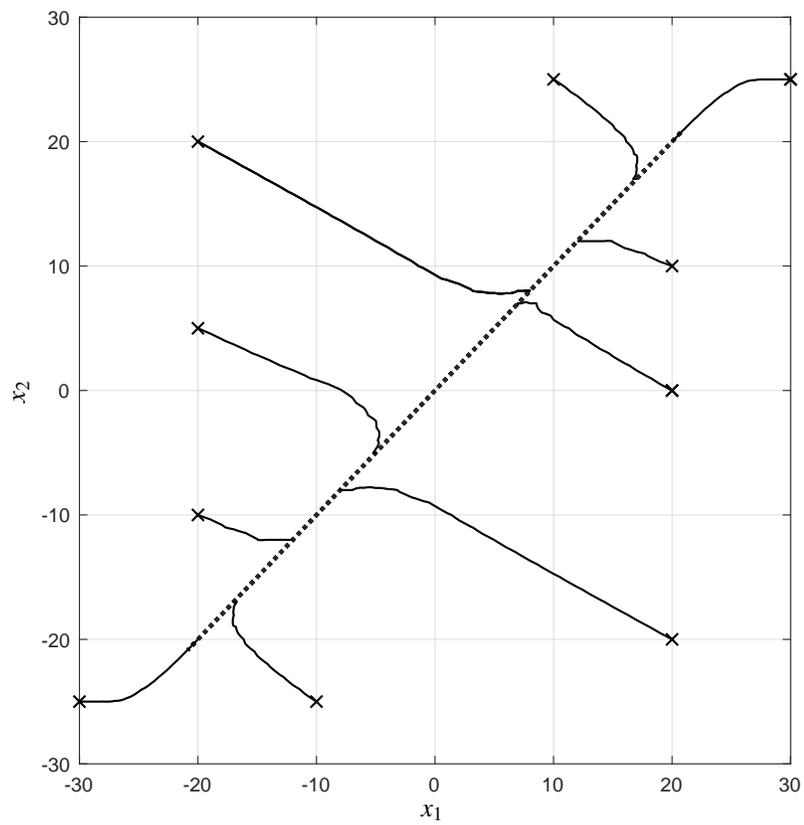}
		\caption{The evolution of the closed-loop system from different initial conditions and the closed-loop attractor $\mathcal{E}(P)$ (dotted line).}
		\label{Fig:Chap1:3BulloLib}
	\end{figure}
Now we compare our results to similar results in \cite{lib/book2003}. Indeed, by relying on the connections exploited in this paper between finite time convergence, strong forward invariance, and Lyapunov stability, a (non-constructive) methodology to get an attractor for the closed-loop system can be obtained by relying on the results in \cite{lib/book2003}. In particular, let $\widetilde{P}\in\Spn$ be a solution to the following Lyapunov equality
		$$
		\He((A+BK)\tr \widetilde{P})=-Q
		$$
		where $Q\in\Spn$. Define, 
		$$
		\Theta_x=2\frac{\vert\widetilde{P}BK\vert}{\lambda_{\min}(Q)}, \rho=\sqrt{2}\Delta\Theta_x
		$$
		Then in \cite{lib/book2003} it is shown that
		$$\mathcal{E}\left(\frac{\widetilde{P}}{\lambda_{\max}(P)\rho^2}\right)$$ 
		is an attractor for the closed-loop system. This methodology is non constructive in the sense that no clear guideline exists to set the matrix $Q$ so as to reduce the size of the attractor. 
		To evaluate the benefit of our methodology in this example, we compare the set $\mathcal{E}(P)$ with a batch of $100$ attractors generated as illustrated above for which the matrix $Q$ is selected randomly, the results are reported in \figurename~\ref{Fig:Chap1:3BulloLib2}. Numerical results show that our methodology provides the tightest attractor in this example. Moreover, \figurename~\ref{Fig:Chap1:3BulloLib2} points out that the resulting attractor strongly depends on the selection of the matrix $Q$.
\begin{figure}
\psfrag{x1}[][][0.8]{$x_1$}
\psfrag{x2}[][][0.8]{$x_2$}
\includegraphics[scale=0.4]{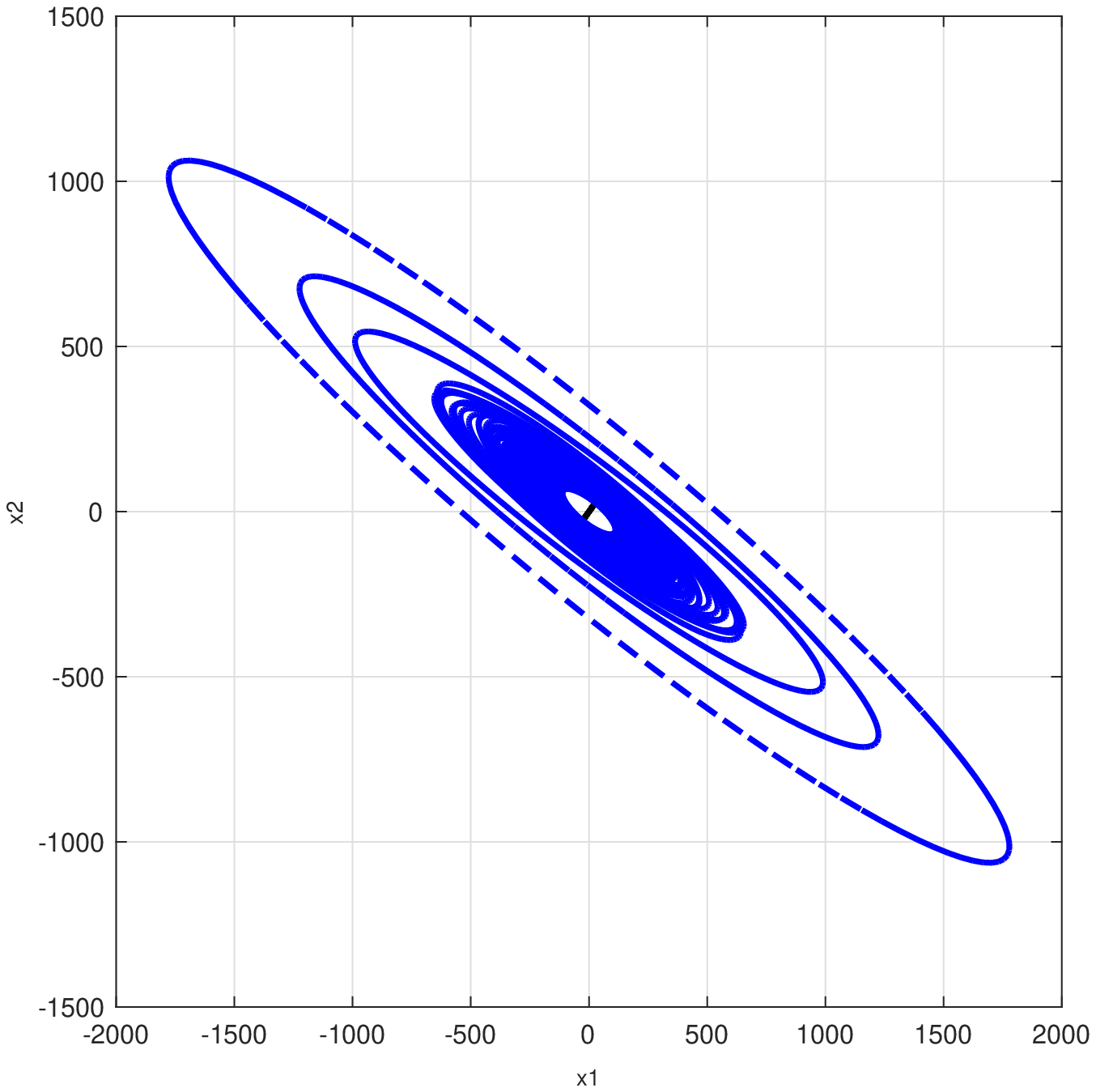}
\includegraphics[scale=0.4]{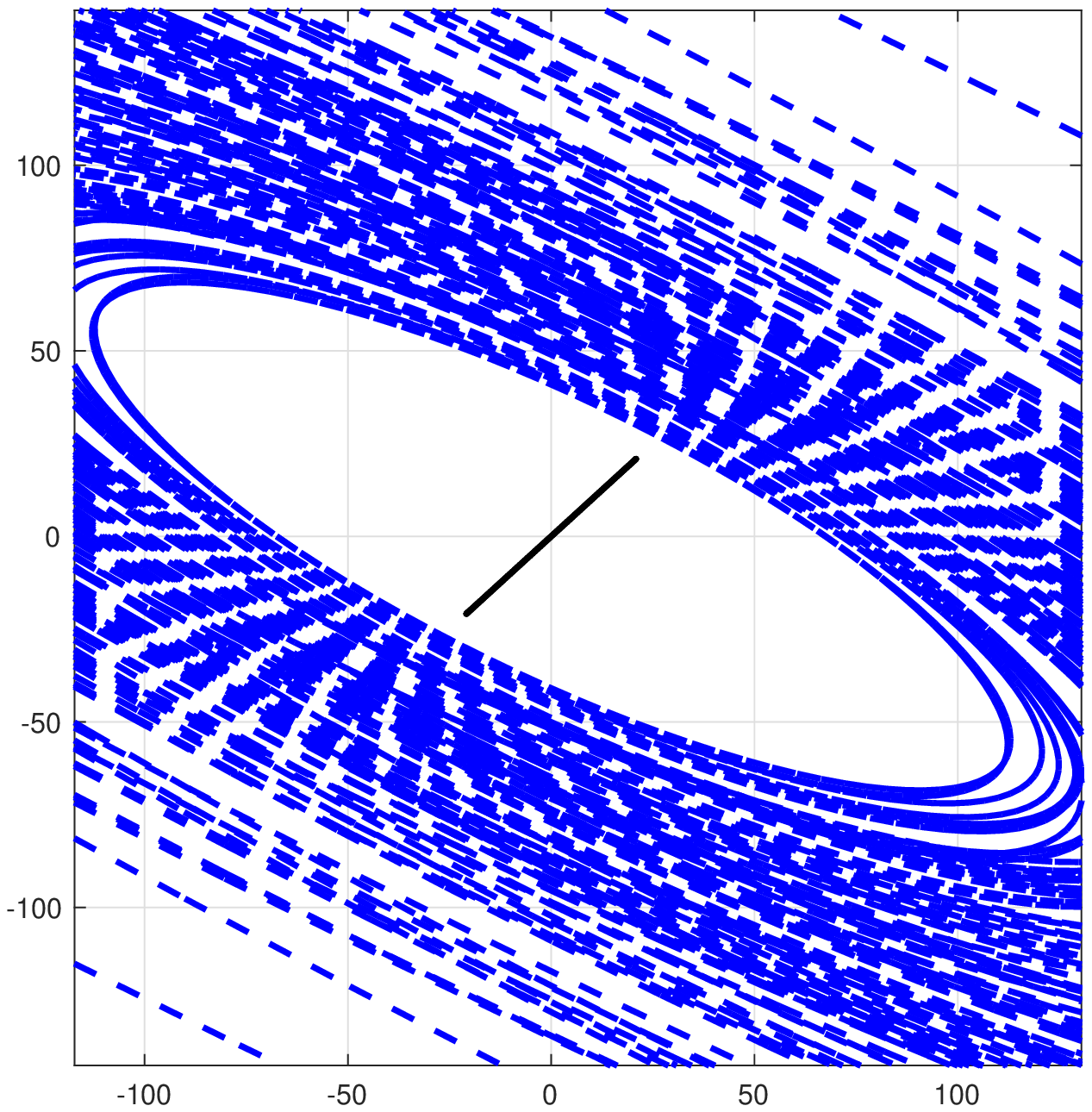}
\caption{Closed-loop attractors: $\mathcal{E}(P)$ (solid line) and $\mathcal{E}\left(\frac{\widetilde{P}}{\lambda_{\max}(\widetilde{P})\rho^2}\right)$ obtained with different matrices $Q$ (dashed line), a close-up in the right picture.}
\label{Fig:Chap1:3BulloLib2}
\end{figure}
\end{example}
\begin{example}(A multi-input plant)
\label{ex:Chap1:3dState}
Consider the example from \cite{amato2009stabilization} for which
$$A=\begin{bmatrix}
-0.5 & 1.5 & 4\\ 4.3 & 6 & 5\\ 3.2 & 6.8 & 7.2
\end{bmatrix}, B=\begin{bmatrix}-0.7 & -1.3\\ 0 & -4.3\\ 0.8 & -1.5\end{bmatrix},$$ 
and assume that the measured state is quantized via the uniform quantizer \eqref{eq:Chap1:Uniformquantizer} with $\Delta=(0.5, 0.5)$. We solve \eqref{eq:Chap1:DesignInst} using Algorithm~\ref{Algo:Chap1:StateDesign}. As a convex criterion to minimize, we proceed as in \cite{FerranteAutomatica2015}. In particular, let $N\in\Spn$ be an additional decision variable, by enforcing
\begin{equation}
\begin{bmatrix}
N&\Id\\
\bullet&P
\end{bmatrix}\geq\0
\label{eq:N_opt}
\end{equation}
we minimize $\Tr(N)$, which, thanks to the above constraint, indirectly entails the minimization of $\Tr(P^{-1})$. The selection of such a criterion allows to solve each optimization problem occurring in Algorithm~\ref{Algo:Chap1:StateDesign} via standard semidefinite programming (SDP) solvers. 

In this example, we test Algorithm~\ref{Algo:Chap1:StateDesign} by using three different stabilizing gains for its initialization. The first one
	$$
	K_1=\begin{bmatrix}
	0.0380 & 0.1751& -0.8551\\ 3.8514 & 3.8400 & 9.5510
	\end{bmatrix}
	$$
is borrowed directly from  \cite{amato2009stabilization}. The second one
	$$
	K_2=\begin{bmatrix}
	-0.71  & 1.9 &  -27\\
	4.3&  4.1&  4.3
	\end{bmatrix}
	$$
comes from \cite{FerranteThesis}, and finally the third one,
	$$
	K_3=\begin{bmatrix}
	-0.11527  &  -0.28207  &    -1.2449\\
	2.4835    &   4.2519     &  6.2107
	\end{bmatrix}$$
is the gain issued from the solution to an LQR problem on the pair $(A,B)$, with $Q=\Id_3$, and $R=\Id_2$. For all these three initializations, the tolerance for the algorithm is $\rho=10^{-4}$.
	\figurename~\ref{Fig:Chap1:3DStateIiterations} shows the evolution of  $\Tr(N)$ over the number of iterations for the three proposed initializations. Surprisingly, although Algorithm~\ref{Algo:Chap1:StateDesign} does not ensure convergence toward the optimal solution and the initialization are quite different of each other, the algorithm provides three solutions giving nearly the same value of the objective. This shows, at least in this specific example, that the initialization stage is not excessively crucial, though it may impact the computational burden: the number of iterations might increase depending on the initialization, \eg,  for the third initialization the number of iterations is almost twice  as much as the number of iterations occurring for the second initialization. 
	\begin{figure}[ht!]
		\centering
		\psfrag{x}[][][1]{\# of iterations}

		\includegraphics[scale=0.8]{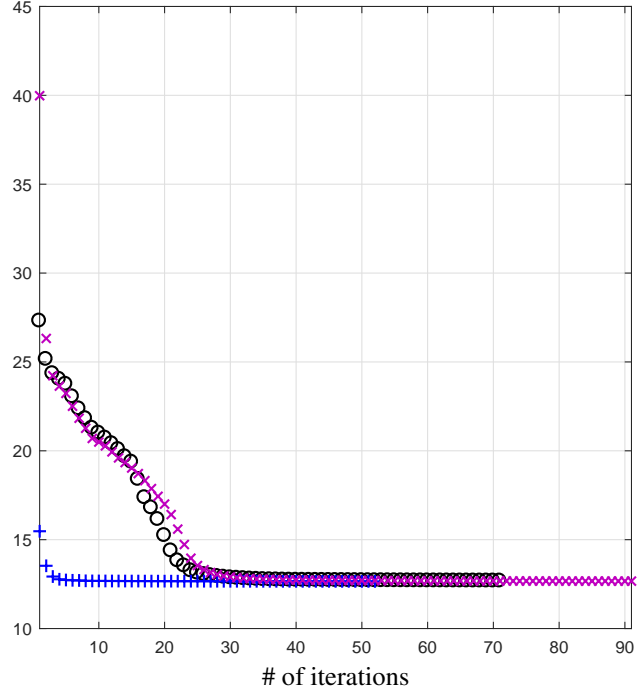}
		\caption[]{The value of $\Tr(N)$ versus the number of iterations. First initialization ($\times$), second initialization ($+$), and third initialization ($\bigO$).}
		\label{Fig:Chap1:3DStateIiterations}
	\end{figure}  
	In \tablename~\ref{tab:Chap1:3dExample}, the different outputs of the algorithm are reported for the three considered initializations. 
	As shown in \tablename~\ref{tab:Chap1:3dExample}, the first and the third initialization provide quite similar results also in terms of controller gain $K$ and the matrix $P$ defining the set $\mathcal{A}=\mathcal{E}(P)$ solving the controller design problem.
	
	To assess the improvement produced by Algorithm~\ref{Algo:Chap1:StateDesign}, by relying on Proposition~\ref{prop:Chap1:StabilityLMIState}, for each of the considered initialization, we perform an analysis stage of the closed-loop system obtained by adopting as a control gain the one chosen to initialize Algorithm~\ref{Algo:Chap1:StateDesign}. The result of such analysis consists, for each initialization, of a matrix $P_a\in\Spn$ such that the set $\mathcal{E}(P_a)$ is the solution to the stability analysis problem associated to the considered closed-loop system.
	\begin{table}[h!]
		\centering
		{\small\begin{tabular}{ l l l l l}
				\hline
				Init. gain & $\Tr(N)$&$K$ & $P$& \# Iter. \\ \hline\\
				$K_1$&$12.673$& $\begin{bmatrix}-3.3894  & -0.37076  &   -25.912\\ 4.4094  &0.57293 & 16.594\end{bmatrix}$& $\begin{bmatrix}0.98947 & 0.29953 & -0.53257\\ 0.29953 & 0.36835 & -0.60327\\ -0.53257 & -0.60327 & 1.448 \end{bmatrix}$&71 \\\\
				$K_2$& $12.653 $ &$\begin{bmatrix}-8.4812   &   -0.1307  & -30.799\\ 6.3613 & 1.4677 & 13.504\end{bmatrix}$ &$\begin{bmatrix}
				0.96943 & 0.30148 & -0.52565\\ 0.30148 & 0.37372 & -0.60606\\ -0.52565 & -0.60606 & 1.4383
				\end{bmatrix}$&52\\\\
				$K_3$ & $12.669$ &$\begin{bmatrix} -3.4714   &  -0.39353   &   -25.839\\ 4.3721 &  0.60222  &  16.29\end{bmatrix}$ &$\begin{bmatrix}0.58983 & 0.17519 & -0.12874\\ 0.17519 & 0.41502 & -0.54817\\ -0.12874 & -0.54817 & 1.123 \end{bmatrix}$&91\\\\
				\hline
		\end{tabular}}
		\caption{The different outputs of Algorithm \ref{Algo:Chap1:StateDesign} for the three different initializations.}
		\label{tab:Chap1:3dExample}
	\end{table}
	In particular, since in this example the measure of the set $\mathcal{A}=\mathcal{E}(P)$ chosen to design the controller is related to $\Tr(P^{-1})$, given a stabilizing gain $K$, to perform the above mentioned analysis stage, we solve the following optimization problem
	$$
	\begin{aligned}
	& \underset{P,S_1,S_2,\tau}{\text{minimize}}
	& & \Tr(N) \\
	& \text{subject to}
	& &S_1, S_2\in\Dpn, P\in\Spn, N\in\Spn, \tau>0, \eqref{eq:Chap1:EqProp1QuantizedState},\eqref{eq:Chap1:EqProp2QuantizedState}\\
	&&&\begin{bmatrix}
	N&\Id\\
	\bullet&P
	\end{bmatrix}\geq\0
	\end{aligned}
	$$
	As illustrated previously, to overcome the nonlinearity introduced by the product $\tau P$, we perform a grid search for $\tau$ over the interval $(0, 2\times 0.99\min \mathcal{R}(A+BK)]$. By performing such an analysis for each of the considered initialization gains, we obtain the results shown in \tablename~\ref{tab:Chap1:3dExample2}. Numerical results show the effectiveness of the proposed design strategy in reducing the size of the closed-loop attractor $\mathcal{A}$. The shrinkage of the set $\mathcal{A}$ clearly emerges from \figurename~\ref{fig:plots}. 
	\begin{table}[h!]
		\centering
		\begin{tabular}{l l l l}
			\hline
			Initialization gain&$\Tr(P^{-1}_a)$ & $\Tr(P^{-1})$&Improvement \%\\ \hline
			$K_1$&$30.3394$&$12.6719$&$58$\\
			$K_2$&$18.0336$&$12.6682$&$30$ \\
			$K_3$&$73.4109$&$12.6532$&$83$\\
			\hline
		\end{tabular}
		\caption{Size of the set $\mathcal{A}$ for the system in closed loop with the initialization gain and the designed gain.}
		\label{tab:Chap1:3dExample2}
	\end{table}
	\begin{figure}[!htbp]
		\centering
		\psfrag{x1}[][][0.5]{$x_1$}
		\psfrag{x2}[][][0.5]{$x_2$}
		\psfrag{x3}[][][0.5]{$x_3$}
		\includegraphics[scale=0.22]{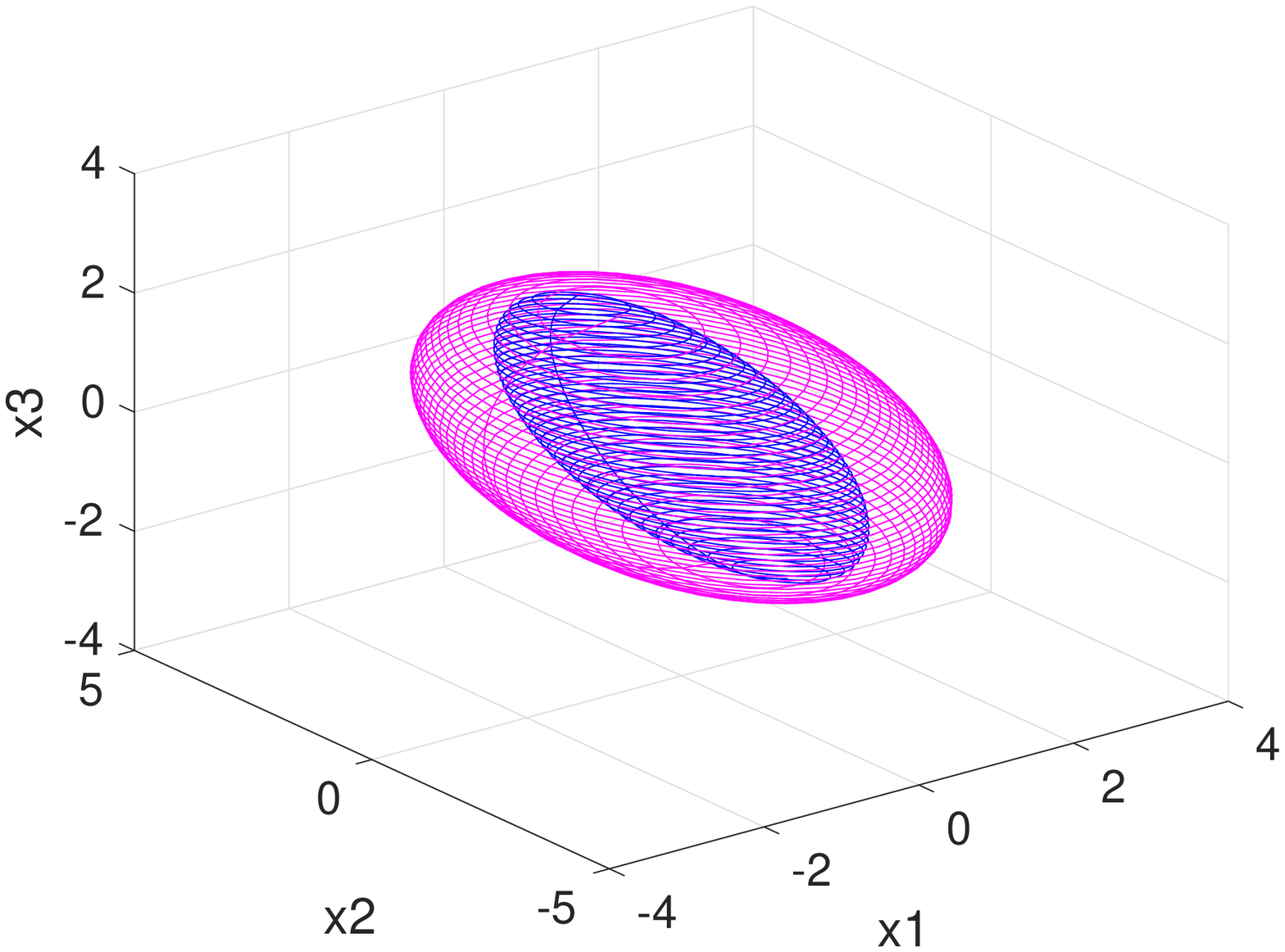}
		\includegraphics[scale=0.22]{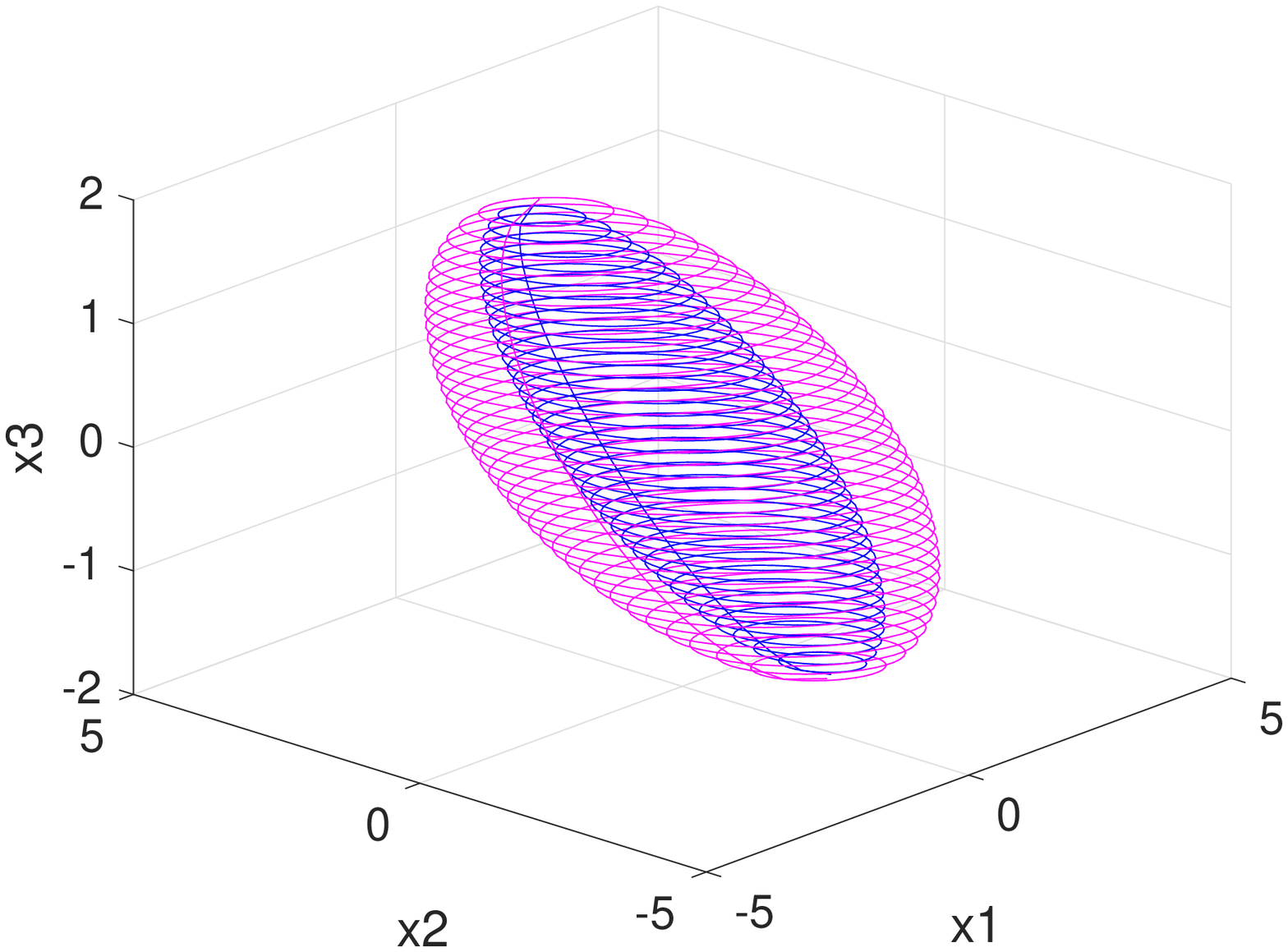}
		\includegraphics[scale=0.22]{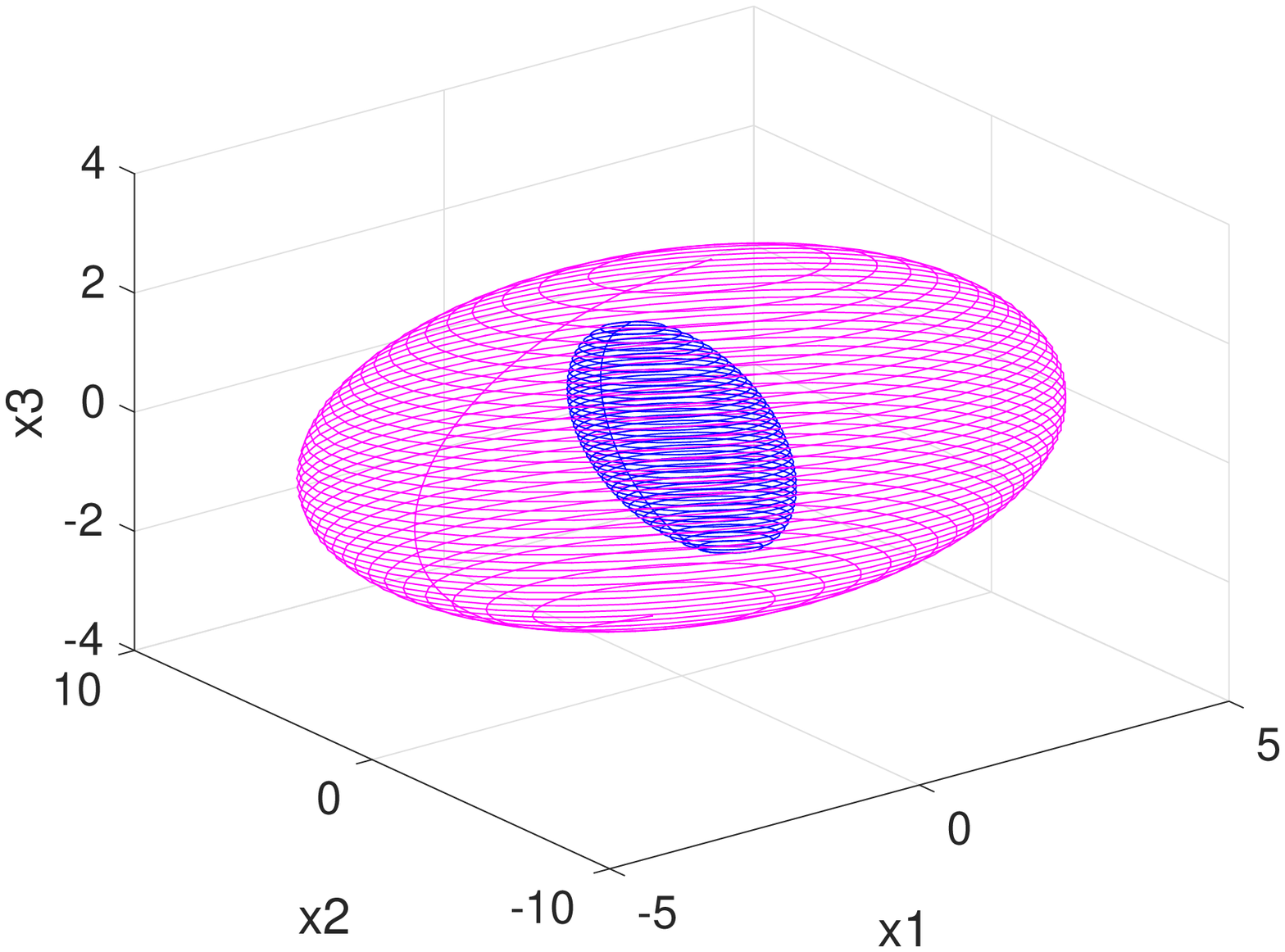}
		\caption{The sets $\mathcal{E}(P_a)$ (smaller ellipsoid) and  $\mathcal{E}(P)$ (larger ellipsoid) obtained with the different initialization gains: $K_1$ (left), $K_2$ (center), and $K_3$ (right)}
\label{fig:plots}
\end{figure}
Before concluding this example, we want to offer a last comparison aimed at showing  
the advantage of the proposed relaxation strategy in tackling bilinear matrix inequality (BMI) constraints over the use of currently available bilinear matrix inequalities solvers.  To this end, we tested two specific BMI solvers: \textbf{BMIBNB} \cite{yalmip} and \textbf{PENBMI} \cite{kocvara2005penbmi}.  BMIBNB is a global solver based on branch-and-bound and convex relaxation while PENBMI is a local solver (then needing an initial point) based  on a generalization of the \emph{Augmented Lagrangian method} proposed in \cite{ben1997penalty}; we invite to see \cite{kocvara2005penbmi} for further details on PENBMI. 
	Numerical tests reveal that, in this example, both BMIBNB and PENBMI (initialized with the same gains considered above) return infeasibility. On the other hand, it is interesting to observe that when PENBMI is initialized as suggested in the initialization step of Algorithm \ref{Algo:Chap1:StateDesign}, it is able to determine a feasible solution. In particular, for each of the three initialization gains considered above, PENBMI returns:
		$$
		\begin{aligned}
		&P=\begin{bmatrix}
		0.6720   & 0.2130  & -0.2283\\
		0.2130  &  0.4045 &  -0.5677\\
		-0.2283  & -0.5677 &  1.2078
		\end{bmatrix}\\
		&K=\begin{bmatrix}
		-6.9421 &  -0.2704 & -28.8272\\
		5.9310 &   1.2306  & 14.1007
		\end{bmatrix}
		\end{aligned}
		$$
		for which one has $\Tr(P^{-1})=12.6465$, \ie,  the improvement with respect to our algorithm is about $0.05\%$. 
		These results show that the initialization step proposed in our algorithm allows to determine, at least in this example, a good estimate of a (potentially global) minimum of the considered optimization problem. It also suggests that the relaxation technique we conceived is effective in the solution of the control design problem addressed in this 
paper when compared with general purpose BMI solvers.
\end{example}
\begin{example}
To illustrate the applicability of our methodology on a practically relevant example. In particular, consider the planar path following problem in \cite{Moarref2014}, where the objective is to control a unicycle robot to track the line $y=0$. The linearized model of the robot around the origin is given by
$$
\begin{bmatrix}
\dot{x}_1\\
\dot{x}_2\\
\dot{x}_3
\end{bmatrix}=\begin{bmatrix}
0&1&0\\
0&-0.01&0\\
1&0&0
\end{bmatrix}\begin{bmatrix}
x_1\\
x_2\\
x_3
\end{bmatrix}+\begin{bmatrix}
0\\
1\\
0
\end{bmatrix}u
$$
where $x_1$ represents the heading angle, $x_2$ represents the angular speed around the yaw axis, i.e., $x_2=\dot{x}_1$, $x_3$ represents the distance to the line $y=0$, and $u$ is the torque input abound the yaw axis. 
We consider the scenario schematically represented in \figurename~\ref{fig:scenario}, in which the state of the robot is measured through a camera and broadcast to the robot via a wireless network. Such measurements are used by a controller colocated with the robot to achieve asymptotic stabilization of the origin. As in \cite{cer:dep:fra/automatica2011}, to allow for finite-bandwidth communication, we consider that the state of the robot is quantized according to the function \eqref{eq:Chap1:Uniformquantizer}, so that transmission occurs only when the state transitions from one quantization region to another.
Within the considered setting, if one assumes that transmission delays are negligible, then the considered setup can be addressed with the tools presented in this paper.    
\begin{figure}[ht!]
		\centering
		\psfrag{y}[][][1]{$x_3$}
		\psfrag{t}[][][1]{$x_1$}
		\includegraphics[scale=0.8]{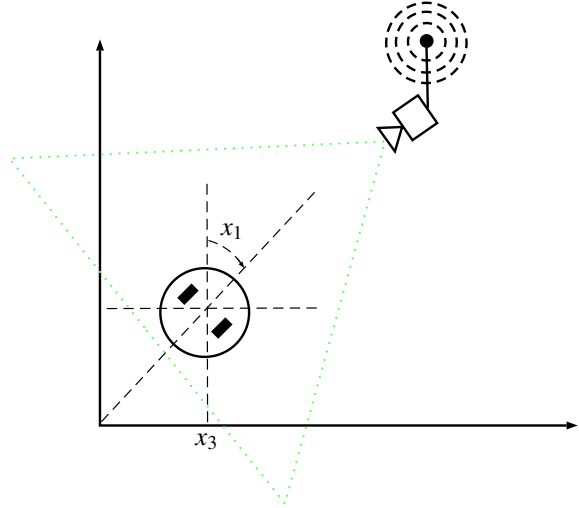}
		\caption{The considered scenario.}
		\label{fig:scenario}
	\end{figure} 
For the purpose of this example, we assume that the quantization error bound vector is selected as $\Delta=\left(\frac{\pi}{12}, 2, 0.01\right)$. Following the same lines as in  Example~\ref{ex:Chap1:3dState}, we solve \eqref{eq:Chap1:DesignInst} using Algorithm~\ref{Algo:Chap1:StateDesign} in which we minimize $\Tr(N)$, where $N$ is subject to \eqref{eq:N_opt}. In particular, by picking as a tolerance $\rho=10^{-2}$ and as an initialization gain\footnote{The initialization gain is the solution to the LQR problem on the pair $(A,B)$ with $Q=\Id$ and $R=1$} 
$$
K_0=\begin{bmatrix}
-2.4142  &  -2.4042  &  -1
\end{bmatrix}
$$ 
Algorithm~\ref{Algo:Chap1:StateDesign} terminates in $19$ iterations and returns 
$$
\begin{aligned}
&K=\begin{bmatrix}-10.7991 &  -3.7784 &  -6.6328\end{bmatrix}\\
&P=\begin{bmatrix}
0.3293  &  0.0418 &   0.1916\\
0.0418  &  0.0419   & 0.0481\\
0.1916    &0.0481   & 0.3857
\end{bmatrix}
\end{aligned}
$$
\figurename~\ref{fig:objective_iterations} shows the evolution of  $\Tr(N)$ over the number of iterations for the considered initialization. The figure shows that Algorithm~\ref{Algo:Chap1:StateDesign} is able to move sufficiently away from the initial point leading to a decrease about $39\%$ of the objective function.
\begin{figure}[h!]
\centering
\psfrag{x}[][][1]{\# of iterations}
\includegraphics[scale=0.6]{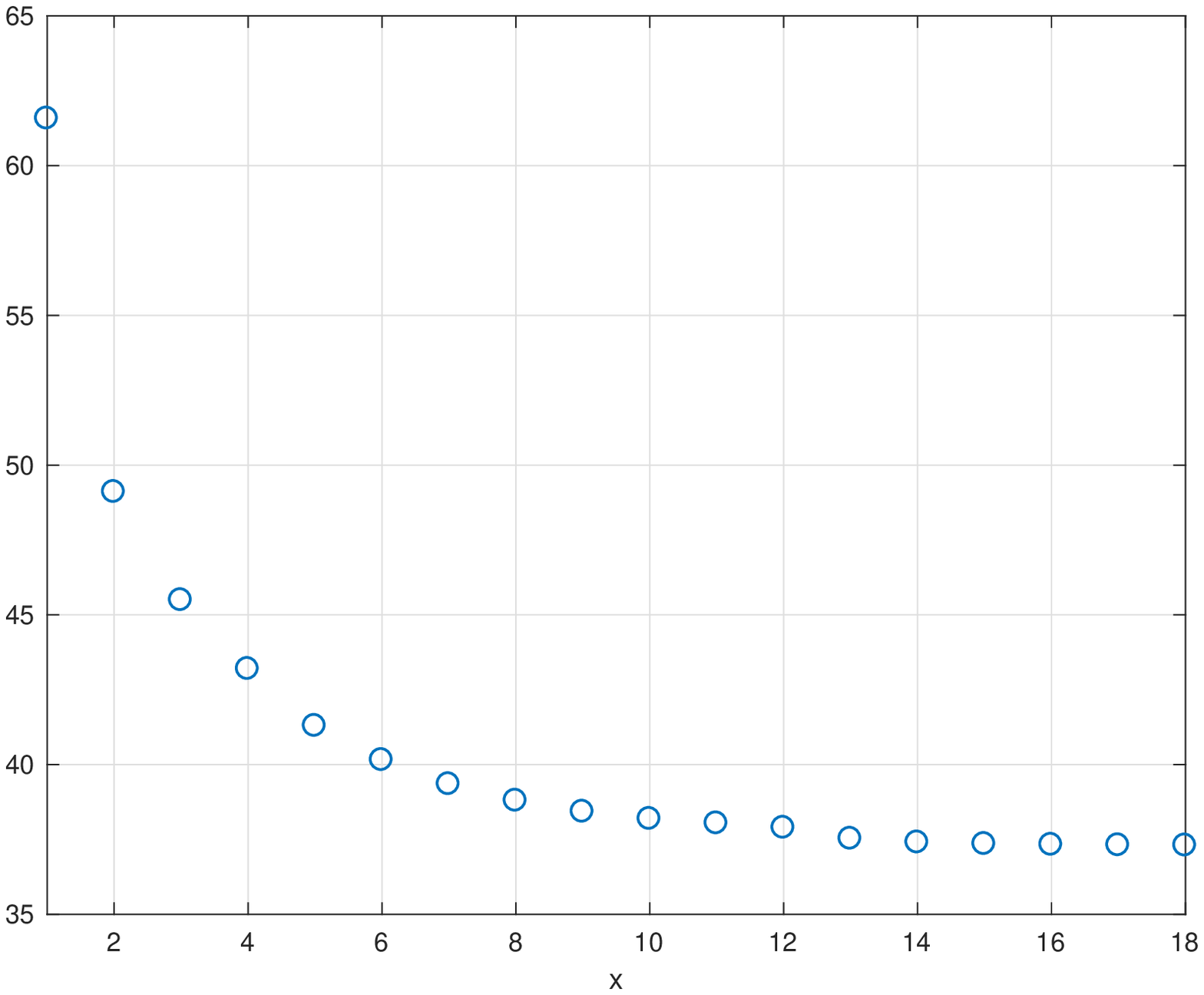}
\caption{The value of $\Tr(N)$ versus the number of iterations.}
\label{fig:objective_iterations}
\end{figure}  
\figurename~\ref{fig:robot_ellipse} depicts some closed-loop trajectories obtained for the designed gain along with the ellipsoidal attractor $\mathcal{E}(P)$. The figure  suggests that closed-loop trajectories converge to a limit cycle contained in $\mathcal{E}(P)$.   
\begin{figure}[ht!]
\centering
\psfrag{x}[][][1]{$x_1$}
\psfrag{y}[][][1]{$x_2$}
\psfrag{z}[][][1]{$x_3$}
\includegraphics[scale=0.7]{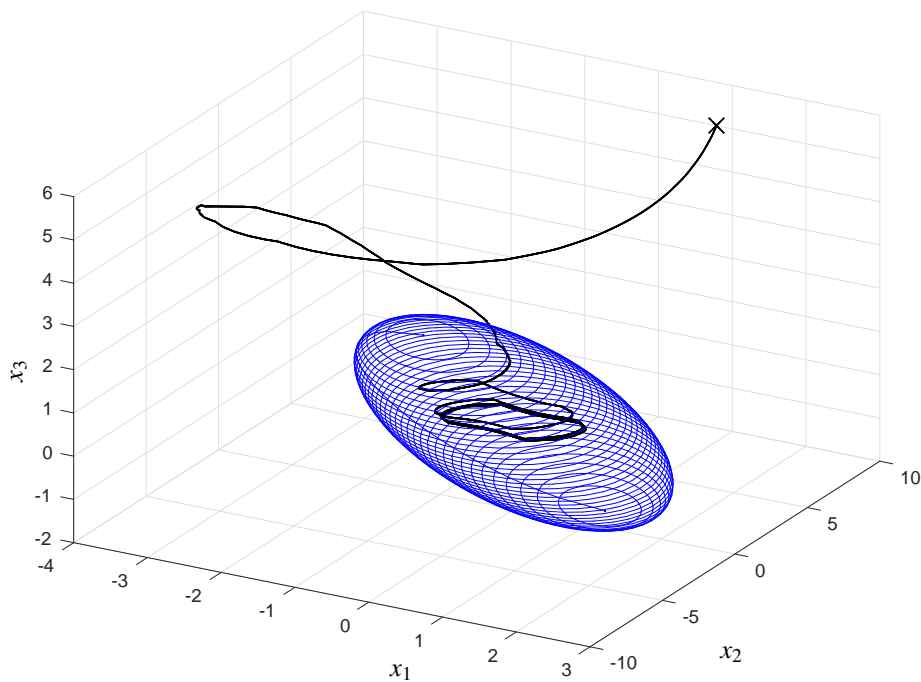}
\caption{The ellipsoidal attractor $\mathcal{E}(P)$ (blue) and a the closed-loop trajectory  from $x(0)=(\pi/4, 10, 5)$.}
\label{fig:robot_ellipse}
\end{figure}  
To emphasize the improvement of our design methodology in reducing the effect of sensor quantization when compared to the initialization gain, in \figurename~\ref{fig:comparison_robot} we report the evolution of the closed-loop system corresponding to the initialization gain $K_0$ and to the designed gain $K$. The figure clearly points out that the gain returned by our algorithm provides an improved behavior with respect to $K_0$.
\begin{figure}[ht!]
\centering
\psfrag{t}[][][1]{$t$}
\psfrag{y1}[][][1]{$x_1$}  
\psfrag{y2}[][][1]{$x_2$}
\psfrag{y3}[][][1]{$x_3$}
\includegraphics[scale=0.7]{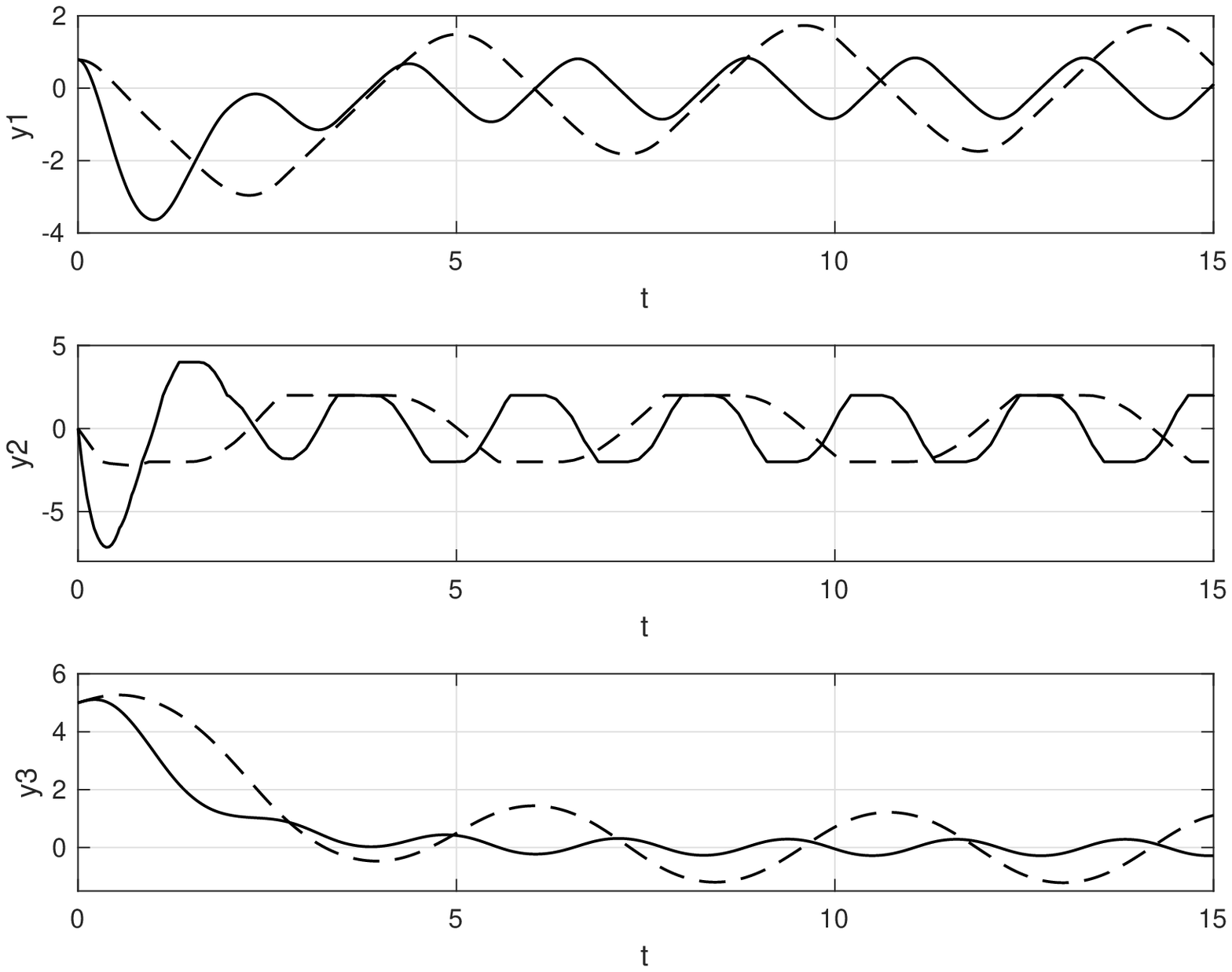}
\caption{Comparison between closed-loop trajectories from $x(0)=(\pi/4, 0, 5)$ for different gains: initialization gain $K_0$ (dashed line) and designed gain $K$ (solid line).}
\label{fig:comparison_robot}
\end{figure}  
\end{example}
\section{Conclusion}
In this paper we addressed stability analysis and stabilization for linear state feedback control systems in the presence of quantized sensor. The analysis is carried out by embedding tools from the literature of discontinuous right-hand side differential equations, as the notion of Krasovskii solutions, in a semidefinite programming setup. The outcome of this approach consists of constructive results to perform the analysis and the design of quantized control systems, while accounting for the discontinuous nature of the closed-loop system.
More precisely, thanks to the novel sector conditions for the uniform quantizer presented in \cite{Ferrante2014ECC},  we proposed sufficient conditions in the form of matrix inequalities that can be used to determine a compact set $\mathcal{A}$ containing the origin that is uniformly globally asymptotically stable for the closed-loop system. Moreover, we shown that the proposed approach is lossless in the sense that if the quantization-free closed-loop system is asymptotically stable, then the sufficient conditions we formulated are always feasible.
Building on the conditions we proposed, some algorithms based on convex optimization over LMIs are devised to effectively solve the considered problems, while providing (sub)optimal solutions with respect to convenient objectives. Numerical comparisons show that the algorithm we conceived is competitive with respect to commercially available BMI solvers.
The effectiveness of the general methodology is shown in some examples. These examples, not only provide a benchmark to test the proposed apparatus from a numerical standpoint, but also point out the complexity hidden behind quantized control systems. 
\clearpage
%%%%%%%%%%%%%%%%%%%%%%%%%%%%%%%%%%%%%%%%%%%%%%%%%%
%\appendix
\bibliographystyle{plain}
\bibliography{biblio}

%%%%%%%%%%%%%%section A%%%%%%%%%

%%%%%%%%%%%%%%%%%%%%%%%%
%%%%%%%%%%%%%%%%%%%%%%%%%%%%%%%%%%%

\end{document}